\renewcommand\normalsize{%
    \@setfontsize\normalsize{11.7}{14pt plus .3pt minus .3pt}%
    \abovedisplayskip 10\p@ \@plus4\p@ \@minus4\p@
    \abovedisplayshortskip 6\p@ \@plus2\p@
    \belowdisplayshortskip 6\p@ \@plus2\p@
    \belowdisplayskip \abovedisplayskip}
\renewcommand\small{%
    \@setfontsize\small{9.5}{12\p@ plus .2\p@ minus .2\p@}%
    \abovedisplayskip 8.5\p@ \@plus4\p@ \@minus1\p@
    \belowdisplayskip \abovedisplayskip
    \abovedisplayshortskip \abovedisplayskip
    \belowdisplayshortskip \abovedisplayskip}
\renewcommand\footnotesize{%
    \@setfontsize\footnotesize{8.5}{9.25\p@ plus .1pt minus .1pt}
    \abovedisplayskip 6\p@ \@plus4\p@ \@minus1\p@
    \belowdisplayskip \abovedisplayskip	
    \abovedisplayshortskip \abovedisplayskip
    \belowdisplayshortskip \abovedisplayskip}
\newtheorem*{thm*}{Theorem}
\newtheorem*{prop*}{Proposition}
\newtheorem*{thmA}{Theorem A}
\newtheorem*{thmB}{Theorem B}
\newtheorem*{thmC}{Theorem C}
\newtheorem{thm}{Theorem}[section]
\newtheorem{lemma}[thm]{Lemma}
\newtheorem{prop}[thm]{Proposition}
\newtheorem{cor}[thm]{Corollary}
\newcommand{\bi}{\begin{itemize}}
\newcommand{\ei}{\end{itemize}}
\theoremstyle{definition}
\theoremstyle{remark}
\newtheorem{obs}[thm]{Remark}
\newcommand{\D}{\mathbb{D}}
\newcommand{\T}{\mathbb{T}}
\newcommand{\R}{\mathbb{R}}
\newcommand{\Z}{\mathbb{Z}}
\newcommand{\N}{\mathbb{N}}
\newcommand{\homeo}{\mathrm{Homeo}_0(\A)}
\newcommand{\A}{\mathbb{A}}
\title[Conditions implying Annular Chaos]{Conditions implying annular chaos}
\author{Alejandro Passeggi}
\address[Passeggi]{CMAT, Facultad de Ciencias, Universidad de la Rep\'{u}blica,
Igu\'{a} 4225, 11400 Montevideo, Uruguay} \email{apasseggi@cmat.edu.uy}
\subjclass[2000]{37E30, 37B40}
\author{F\'{a}bio Armando Tal}
\address[Tal]{Instituto de Matem\' atica e Estat\' istica da Universidade de S\~ao Paulo,
R. do Mat\~ ao, 1010 - Vila Universitaria, S\~ ao Paulo, Brasil}
\email{fabiotal@ime.usp.br}
\thanks{A.P.  has been partially supported by CSIC and ANII. F.T. has been partially supported by FAPESP and CNPq-Brasil}
\begin{document}

\maketitle

\begin{abstract}

This work investigates topological chaos for homeomorphisms of the open annulus, introducing a new set of sufficient conditions based on points with distinct rotation numbers and their topological relations to  invariant continua. These conditions allow us to formulate classic methods for verifying annular chaos in a finitely verifiable version supported on basic properties of the map. The results pave the way for simple computer-assisted proofs of chaos in a wide range of annular maps, including many well--known examples, and we present these proofs
for some analytic families, demonstrating the effectiveness of the method.

\end{abstract}

\section{Introduction}\label{s.intro}

In the study of dynamical systems, the perception that there are various natural phenomena that can be modeled in a simple way and yet behave in a rather unpredictable way dates back to the beginning of the last century. This emergence of ``chaotic'' behavior, which eventually led to the development of the mathematical notion of \emph{chaos}, first became apparent in two qualitatively very different settings: The first one inside Hamiltonian dynamics, as a result of the pioneering work of Poincar\'{e} on \emph{the three-body problem}; the second appeared in dissipative systems associated with periodically forced second-order o.d.e. arising from electric circuits, such as the
\emph{Van der Pol equation}. 

The study of this unpredictability and the development of the concept of chaos was one of the driving forces in the theory of Dynamical Systems in the last century, leading to the establishment of formal notions and also the comprehension of some well-known paradigmatic mathematical examples. From a topological perspective, probably the best notion of chaotic dynamical system is that of a system having positive topological entropy, which is used in this work.

However, despite the wide advancements in the field, a significant challenge remains: determining whether a specific dynamical system exhibits positive topological entropy, particularly in the absence of global topological conditions. This issue plagues many important classical examples and consequently, current theoretical frameworks for chaos struggle to translate directly to proofs  for specific systems. In practice, rigorous proofs of the existence of chaos are often available only for a relatively small set of families, and the parameter ranges where such proofs work is often extremely narrow. Moreover, the classical techniques are hard to be adapted to specific maps, and hence cannot be used in a systematic way. References like \cite{Clone,Cltwo,Levsecondorder,HolmesHoket,holmesmeli,laisangyoung,haiduc} exemplify these limitations. In contrast, in several areas of contemporary science there is a plethora of articles concluding that certain systems are chaotic after numerical simulations, without any rigorous proof. The study of the \emph{double physical pendulum} serves as a notorious example of these situations.

In this work we deal with \emph{annular homeomorphisms}, that is, homeomorphisms of the two-dimensional open annulus. The study of these maps is of well-known relevance, since renowned models with two degrees of freedom have global Poincar\'{e} sections that are annuli. In particular, this is the case for some  restrictions of the $3$-body problem as well as the  above mentioned Van der Pol Equation, and many others in various fields of science. These dynamics can often be displayed as a parameterized family of maps which contain an initial \emph{integrable} model with non-trivial rotation sets, and it was the study of these systems that led Birkhoff, continuing Poincar\'{e}'s work, to implement the systematic study of annular discrete dynamics with non-trivial rotational behavior.
Since then, the theory has been developed in several different branches and here we focus on the topological approach. 

Building on Birkhoff's foundational work, the study of annular dynamics from a topological viewpoint has focused on the relationship between rotation sets and entropy. Two key concepts have emerged, dating back all the way to Birkhoff himself \cite{birk1,birk2}: \emph{instability regions} in the conservative setting and \emph{circloids} with non-trivial rotational behavior in the dissipative setting (generalizing Birkhoff attractors). Instability regions are open invariant annular regions containing  points with distinct rotation numbers and without invariant essential continua (as studied in \cite{birk2,arnaud,mather1,mather2,mather3,frankspatrice}). Circloids, on the other hand, are invariant continua irreducibly separating the annulus into two unbounded components, and circloids carrying distinct rotation numbers have been studied in \cite{birk1,Levsecondorder,herman,lecalveztesis,gagltre,
veer,casdalgi,HolmesHoket,Gukholmes,walker,barge,koropeki,haiduc,laisangyoung}. These concepts are crucial for classifying annular regions that exhibit rotational chaos, that is, those for which a power of the map has a topological horseshoe with nontrivial rotational behavior, as defined for instance in \cite{paposa}, and meaningful positive results were obtained in the last years. Namely,
instability regions imply the existence of rotational horseshoes, as proved in
\cite{frankshandel} for smooth maps with techniques based on Nielsen-Thurston theory, and then
in \cite{forcing} for the $C^0$ case as an outcome of the \emph{forcing technique} based on Brouwer Foliations; and \emph{attracting}
circloids with non-trivial rotation sets imply the existence of rotational horseshoes,
proved first in \cite{paposa} using basic topological techniques and classical results, and then in \cite{lecalveztal} based also on Forcing theory.
Concerning this last result, there is a folkloric conjecture \cite{casdalgi},\cite{koropeki} where the attracting hypothesis was unnecessary, establishing that the presence of a circloid with nontrivial rotational  behavior implies the existence of horseshoes. Despite these developments, we find the same problem mentioned above, as they are still difficult to implement in specific cases: how can one determine if a given map exhibits
an instability region or an invariant circloid with non-trivial rotation set?

\medskip

The purpose of this paper is to address the identified shortcomings by introducing simple, finitary criteria based on basic  properties of the maps to ensure annular chaos. The principal achievement lies in transforming the classical characterization of annular chaos into finitary statements that rely solely on fundamental properties of the maps, without requiring a detailed analysis of the specific map under consideration. This shift is important already at the foundational level of the theory: when working with Poincaré sections, studying (for instance) the differential of the return map is typically a very delicate task. Even in Poincaré’s later works, one can read that developing a topological approach to the dynamical properties of differential equations should be of central importance. 
Let us illustrate the type of results we aim to describe with two particular statements which concern the conservative and dissipative settings respectively. Before doing so, 
we need to introduce our key concept. 

\smallskip

A pair of topological closed disks $U_0$, $U_1$ containing fixed points  $x_0,x_1$ of an annular map $f$
form a \emph{$3$-disjoint pair of neighborhoods for} $x_0, x_1$ ($3$-dpn) whenever 
\[
\bigcup_{j=0}^{3} f^j(U_0) \quad \text{and} \quad \bigcup_{j=0}^{3} f^j(U_1)
\]
remain both inessential (that is, contained in a topological disk) and disjoint. We say that two fixed points $x_0,x_1$  are \emph{3-Birkhoff related} if there exists a $3$-dpn $U_0, U_1$ such that the forward orbit of $U_0$ intersects $U_1$ and the forward orbit of $U_1$ intersects $U_0$.

A homeomorphism of a surface is said to be non-wandering if it has no wandering points.

\begin{thm*}

Let $f$ be a non-wandering annular homeomorphism homotopic to the identity,
and let $x_0,x_1$ be a pair of fixed points having different rotation numbers.
Then, if $x_0,x_1$ are $3$-Birkhoff related, the map exhibits rotational chaos.

\end{thm*}

We contrast this with the previously known result in this setting, where the condition implying annular chaos is that of
$x_0,x_1$ being \emph{Birkhoff related} \cite{forcing}, that is, for all pairs of neighborhoods $V_0(\varepsilon),V_1(\varepsilon)$ of $x_0$ and $x_1$ respectively, there are orbits which visit  both sets. Since the condition must be verified for every pair of neighborhoods, the property fails to be finitary.

The dissipative version of the previous result follows.

\begin{thm*}

Let $f$ be an annular homeomorphism isotopic to the identity, let $E$ be an essential annulus with $f(E)\subset\mathrm{int}(E)$, such that both boundary components of $f^n(E)$ meet
both disks of a 3-dpn for some pair of fixed points $x_0,x_1$ and some $n\geq 1$. Then $f$ exhibits rotational chaos.

\end{thm*}

The first result is re-introduced below in a more detailed version given by Theorem A, together with a metric version given by Theorem B. For the dissipative maps we have a more detailed statement given by Theorem C. All these results follow the same spirit, and are based on the crucial concept of an \( N \)\emph{-disjoint pair of neighborhoods}, which is the natural generalization of the $3$-dpn just introduced.

In order to show that  our work can be directly applied to examples as it is not purely theoretical,
Section \ref{s.numerics} presents an initial collection of computer-assisted proofs (CAPs) whose conception is based on our theoretical results. These proofs use as programming tools the implementation of interval arithmetic in the $C^{++}$ library CAPD::DynSys (see \cite{compassproof}). This initial sample of CAPs is conducted for several well-known analytic families in both dissipative and Hamiltonian contexts. The numerical implementations are straightforward, as detailed in the aforementioned section, and exhibit significant flexibility, relying solely on basic information about the given maps. We present here only the fundamental aspects of the work \cite{theoappandnumer} in collaboration with Maik Gröger\footnote{Faculty of Mathematics and Computer Science, Jagiellonian University, Poland.} and Maciej Capinski\footnote{Faculty of Applied Mathematics, AGH University of Science and Technology, Krakow, Poland.}, which encompasses many additional scenarios (in both the twist and non-twist regimes) as well as other new theoretical results also covering the phenomenon known as Arnold diffusion. In particular, this new work successfully obtains CAPs for a large set of parameters for the families discussed here, as well as for other cases. Returning to the content of this article, Section \ref{s.numerics} considers the following cases.

\smallskip

In the Hamiltonian case, we deal with the broad variation of the Standard Family given as follows:
$$f_{\textbf{h},\textbf{v}}(x,y)=(x+\textbf{h}(y)\,,\,y+\textbf{v}\,(\sin(2\pi\,(x+\textbf{h}(y)))))$$
where $\textbf{h}$ and $\textbf{v}$ are required to satisfy simple compatibility conditions. This case shows that
the method is indeed flexible, namely, that we can change from one map to another without 
deep analysis of the different situations.

\smallskip

For dissipative maps, we work with the  \emph{Dissipative Standard Family} given by:
$$f_{a,b}(x,y)=\bigl(x+a y,\,b y+\sin(2\pi(x+a y))\bigr),
\qquad a\in\R,\ b\in(0,1).$$
In the excerpt displayed here we fix $a=3$ and the Jacobian $b$ varies from 0.2 to 0.8. We highlight that the Jacobian approaches 1, thereby going beyond the range of the usual strategies, where strong dissipation properties are required. 
Concerning the parameter $a$, it is fixed at 3 as it ensures the existence of the needed pair of fixed points with different rotation numbers in order to apply the theoretical results. For cases below this value, one could just work with powers of the map.

\smallskip

For completeness, let us close the first part of the introduction by listing well-known sufficient conditions and methods in determining positive topological entropy, which can be compared with the ones obtained here.

\begin{itemize}
\item \textbf{Melnikov integral.} This method was already used by Poincar\'{e}, and it shows positive entropy by giving sufficient conditions which imply that a saddle homoclinic connection of an integrable
system becomes a transversal homoclinic intersection when the parameters of the system
vary. It is usually the case that it can only be implemented for parameters \emph{very close} to those of integrable cases. See for instance the nice survey \cite{holmesmeli}.

\item \textbf{Hyperbolic sets and cone fields.} The aim in this case is to find non-trivial hyperbolic sets and is based on the construction of fitted cone fields or, equivalently,
of fitted quadratic-forms. This method has been used in renowned families in the Hamiltonian and dissipative settings (see for instance \cite{bunimovich,laisangyoung}), and usually, in the dissipative setting, it works only in situations where the dissipation is very strong.  For instance, it is often the case in the dissipative context that one tries to make use of the theory of one-dimensional chaotic dynamics, and this approach requires that the Jacobian of the maps are close to zero and
 a detailed study of the differential of the map.

\item \textbf{Nielsen-Thurston Theory.} In the absence of a rich surface topology, as is the case in the annulus, this method requires the existence of a certain configuration of periodic orbits, so that once removed, the remaining dynamics is isotopic to a pseudo-Anosov map. This family of periodic points can be characterized with \emph{braids} (see for instance \cite{braids1,braids2}). The limiting feature for the implementation of this approach in our context is the following: even in the presence of large rotation sets, the topological entropy of these systems can be arbitrarily small (see for instance \cite{paposa}). This means that the periodic orbits needed in order to apply the method could have an arbitrarily large period, and be difficult to detect. 
Another relevant case where Nielsen-Thurston Theory can be used to show positive entropy is for lifts of maps of the $2$-dimensional torus \cite{libremckay}.

\item \textbf{Generic dynamics.} In contrast with the study of specific examples, there is a collection of results where positive entropy is shown to hold in a residual set of area-preserving smooth diffeomorphisms, see \cite{takens,pixton} and recently \cite{lepe}. For example, a generic $C^\infty$ perturbation of the time-one map of the Hamiltonian dynamics associated with the simple pendulum is clearly chaotic. Note that this does not replace the Melnikov integral method since the families we are interested in may very well lie in the complement of such residual set.

\item \textbf{Numerical evidence.} There is a vast literature about numerical evidence of chaos for different meaningful systems inside and outside mathematics. Nevertheless, rigorous proofs are extremely rare.
\end{itemize}

We now proceed to describe our theoretical results in detail.

\subsection{A crucial topological concept.}

Although our main results are topological, we will use the model for the annulus given by 
$$\A=\T^1\times\R\mbox{ where }\T^1=\R/_\sim,\mbox{ and} \ x\sim y\mbox{ if and only if }x-y\in\Z.$$
Define $\pi:\R^2\to \A$ by $\pi(x,y)=([x],y)$,  where $[x]$ is the equivalence class of $x$ by the relation $\sim$.
Then $\R^2$ together with $\pi$ builds the universal covering of $\A$.
This choice simplifies the definitions and the presentation of the involved techniques.

Let $\homeo$ be the set of orientation and ends preserving
homeomorphisms of $\A$, and consider two fixed points $x_0,x_1$ of $f\in\homeo$.
These points have different \emph{rotation numbers} if
$$(F(x_0)-x_0)-(F(x_1)-x_1)=(\rho,0)\mbox{ with }\rho\in \Z\setminus\{0\}$$
for any lift $F$ of $f$. Such a number does not depend
on $F$ and hence is called \emph{rotational difference} of $x_0,x_1$.
The existence of a pair of fixed points having different rotation numbers for $f$ or some power of it, 
is usually obtained in the context mentioned in the introduction.

\smallskip

Given $x_0$, $x_1$ as above, and $U_0,U_1$ connected neighborhoods of $x_0,x_1$ respectively,
we say that they form a \emph{$N$-disjoint pair of neighborhoods} 
whenever the sets
$$\bigcup_{j=0}^N f^j(U_0)\ \ \ \ \ \ \  ,\ \ \ \ \ \ \ \ \bigcup_{j=0}^N f^j(U_1)$$
are disjoint inessential sets\footnote{Contained inside a topological disk}.
We denote such a pair of neighborhoods by $N$-dpn.

\small
\begin{figure}[htbp]
\centering
\def\svgwidth{.6\textwidth}
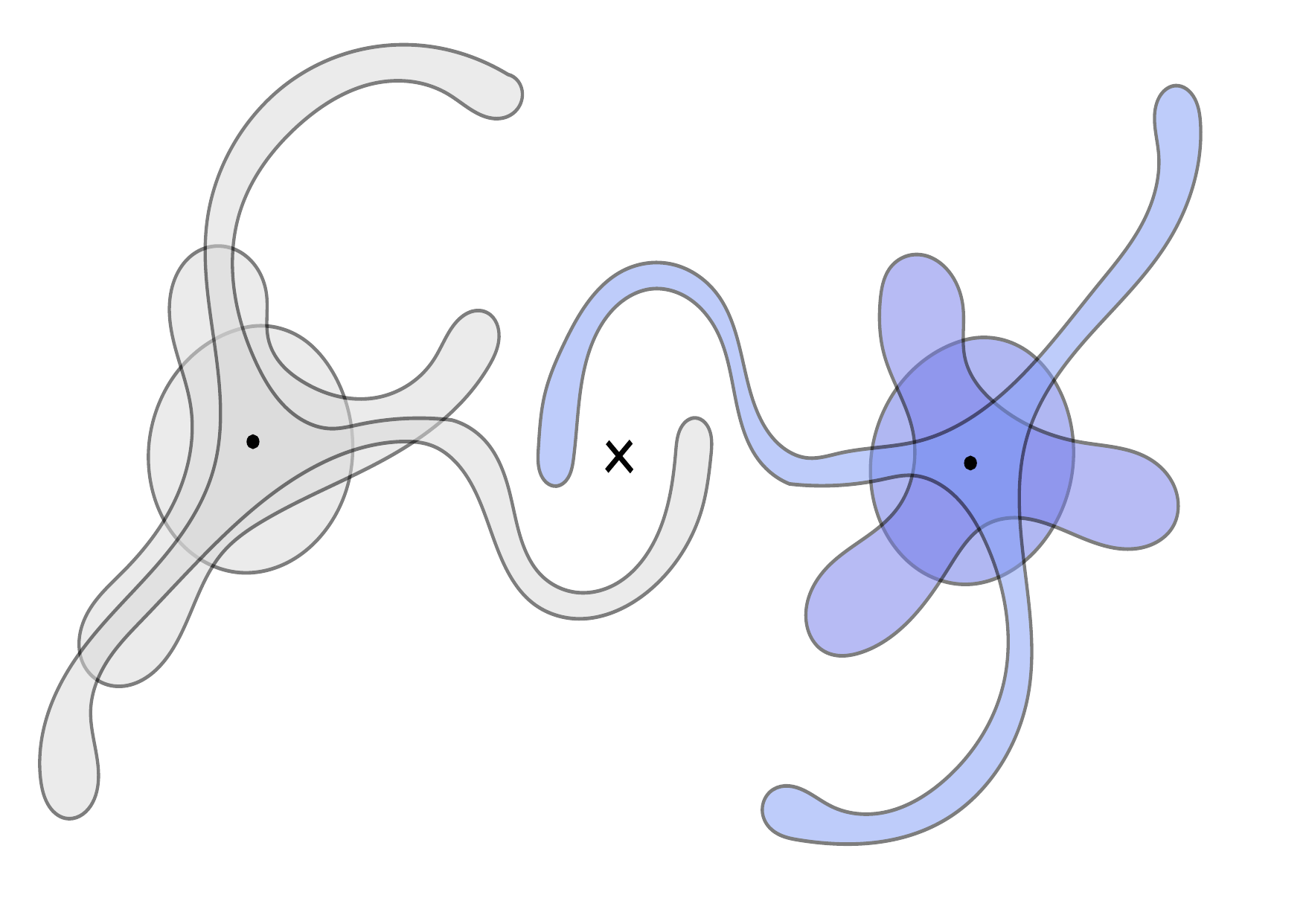
\caption{Representation of a 2-disjoint pair of neighborhoods $U_0,U_1$ for the pair of fixed points $x_0,x_1$. The annulus is represented as the plane minus a single point (denoted by a cross).}
\end{figure}
\normalsize

Finally, we say that two such fixed points $x_0,x_1$ are $N$-\emph{Birkhoff related}
whenever there exists an $N$-dpn $U_0,U_1$ such that both the forward orbit
of $U_0$ intersects $U_1$ and the forward orbit of $U_1$ intersects $U_0$.

\subsection{Results.}

Most of the results are of a purely topological nature, but we also present statements inside metric contexts
as they are relevant in applications.
We state first results related to Hamiltonian settings. These are supported by the techniques developed in this article and also in the characterization of zero entropy maps of the sphere obtained in 
 \cite{frankshandel, forcing, lecalveztal}. Denote by $\textrm{Homeo}_{0,\mathrm{nw}}(\A)$ the subset of $\homeo$ consisting of maps whose non-wandering set is equal to $\A$.

\begin{thmA}\label{t.mainconserv}
Let $f\in\textrm{\normalfont{Homeo}}_{0,\textrm{\normalfont{nw}}}(\A)$ and let $x_0,x_1$ be a pair of
fixed points having a rotational difference of $\rho\in\N$. If $x_0,x_1$
are $3$-Birkhoff related, then the map exhibits rotational chaos.

Furthermore, if $\rho= 2$, then the result is valid if $x_0,x_1$
are $2$-Birkhoff related, and if $\rho\ge 3$, then the result is valid if $x_0,x_1$
are $1$-Birkhoff related.
\end{thmA}

\begin{obs}
Let us point out that, in some sense, Theorem A is optimal. It is not difficult to construct examples where the conclusions of the theorem fail to hold when all other hypotheses are kept, but the fixed points are just $2$-Birkhoff related, or if $\rho=2$  the points are $1$-Birkhoff related.  
\end{obs}

Note that in the Hamiltonian setting the state of the art claimed that
the existence of a rotational horseshoe is equivalent to that of an instability region or that of pairs of fixed points being
\emph{Birkhoff related} as defined above. 
With this new result, it is established that the existence of rotational horseshoes
is equivalent to the notion, of finite nature, of the existence of two fixed points with different rotation numbers
$3$-Birkhoff related (or 2 or 1 depending on $\rho$), for $f$ or some power of $f$. 
A similar result obtained with
the same kind of techniques but this time in a metric context follows.
This statement is intended to help with the numerical implementations
as shown in Section \ref{s.numerics} of this paper. 

Define the family of sub-annuli
$$A_L=\pi(\R\times[-L,L])$$
and denote 
$$ N_L(f)=\max_{x\in A_L}\vert \textrm{pr}_2\left(x-f(x)\right)\vert,$$ where 
$\textrm{pr}_2:\A\to\R$ is the projection onto the second coordinate induced in $\A$.

\begin{thmB}\label{c.hamiltoniancasedif}
Let $f\in\textrm{\normalfont{Homeo}}_{0,\textrm{\normalfont{nw}}}(\A)$. Assume there exist $0<L_1<L_2$ such that:
\begin{itemize}
\item[(a)] $f$ has a pair of fixed points $x_0,x_1$ in $A_{L_1}$ with rotational difference given by
a positive integer $\rho$.
\item[(b)] $\bigcup_{i=0}^{r} f^{i}(A_{L_1})\subset A_{L_2}$, where $r=3$ if $\rho=1$, $r=2$ if $\rho=2$ and $r=1$ if $\rho\ge 3$.
\end{itemize}
If some orbit of $f$ visits both connected components of the complement of
$A_{L_2}$, then $f$ exhibits rotational chaos. 
\end{thmB}

\begin{obs}
For numerical computations, note that condition (b) of Theorem B is satisfied whenever $L_2\ge L_1+\max_{1\le i\le r}N_{L_1}(f^i)$ or $L_2\ge L_1+rN_{L_2}(f)$.
\end{obs}

In the dissipative setting, our main result deals with maps leaving invariant an annular continuum (a continuum in $\A$ whose complement consists of precisely two unbounded connected regions) exhibiting non-trivial rotational behavior in its surroundings. In this context, we provide conditions that imply the occurrence of rotational chaos. It is important to note that the mere existence of invariant annular continua with points of differing rotation numbers does not, in itself, entail chaos; thus, additional conditions are necessary. Previously established results indicate that positive topological entropy is guaranteed when the essential annular continuum is an attracting (or repelling) circloid with a non-trivial rotation set, see \cite{paposa, lecalveztal}. Specifically, this refers to an essential annular continuum that does not properly contain other annular continua and acts as either an attractor or a repeller. Theorem C significantly alters this landscape by demonstrating the existence of chaos under considerably weaker assumptions.

\begin{thmC}
Let $F$ be a lift of $f\in\homeo$. Assume that there exists a pair of disjoint essential curves \( \gamma^-, \gamma^+ \) such that, if $E$ is the annulus bounded by them, then  \( f^n(E) \subset \mathrm{int}(E) \) for some \( n \in \mathbb{N} \), and such that $E$ contains two fixed points \( x_0, x_1 \) with rotational difference \( \rho \geq 1 \). If, for some $3$-dpn \( U_0 \) and \( U_1 \) of \( x_0 \) and \( x_1 \), the orbits of both \( \gamma^- \) and \( \gamma^+ \) visit both \( U_0 \) and \( U_1 \), then the following holds.

\begin{itemize}
\item The attractor $\mathcal{K}=\bigcap_{i\in\N} f^i(E)$ contains a unique circloid $\mathcal{C}$.
\item $\rho(\mathcal{C},F)$ is a nontrivial interval of length at least $1/3$.
\item For every rational $r$ in $\rho(\mathcal{C},F)$, there exists a periodic orbit in $\partial \mathcal{C}$ with rotation number $r$.
\item The map $f$ has rotational chaos.
\end{itemize}

Furthermore, if $\rho= 2$, then the result is valid with $U_0,U_1$
forming a $2$-dpn, and if $\rho\ge 3$, then the result is valid with $U_0,U_1$
forming a $1$-dpn.

\end{thmC}

\smallskip
\begin{obs}
It is possible to provide better estimates for the length of the rotation interval in Theorem C, as is done in Theorem~\ref{t.single circloid}. That same result shows that the dissipative hypothesis is not necessary for concluding that an invariant essential annular continuum has a unique circloid, just that both neighborhoods $U_0$ and $U_1$ cross the region.   
\end{obs}

\begin{obs}
Let us point out that, in the same sense as Theorem A, Theorem C is optimal. It is not difficult to construct examples where the conclusions of the theorem fail to hold when all other hypotheses are kept, but $U_0, U_1$ form just a $2$-dpn, or if $\rho=2$ and $U_0, U_1$ form a $1$-dpn.  
\end{obs}

At this point, we can provide a clear idea of the theoretical outcome of this work: the two classical notions used to describe annular chaos, instability regions in the Hamiltonian case and circloids with non-trivial rotation in the dissipative case (both introduced by Birkhoff), are translated by means of Theorem A and Theorem C into concepts of finite nature, which are grounded in basic properties of the maps under consideration.

\smallskip

Finally, let us mention that the results and techniques developed here have a number of theoretical applications, some of which will appear together with the already mentioned computer-assisted proofs in \cite{theoappandnumer}. We highlight three of them. The first one
concerns the so-called diffusion of Hamiltonian periodic annular maps, which is a phenomenon of main importance in mathematics and physics \cite{arnolddiff2half,microion}. The techniques and results here introduced turn out to be important tools for finding a threshold, computed out of basic properties of the prescribed map, for which the existence of an orbit climbing above this threshold
implies the diffusion property for the map and this works in the twist and non-twist regime. 
Our second sort of applications is related to the first one and deals 
with concepts in Rotation Theory of $\mathbb{T}^2$. Again, the results of this article provide the main ingredients
for obtaining bounds on the possible deviations from the rotation set, and also yield quantitative
conditions implying that the rotation set has non-empty interior; the remarkable feature is that these
estimates and conditions depend only on basic properties of the maps, for instance their maximal
displacement and the supremum norm of their differentials.
Such bounds
are fundamental in many of the recent developments in the field, 
see \cite{davalos,davalosseg,addas2015uniform,forcing,conejeros2023applications}.

\subsection{Organization of the paper}
Section \ref{s.prelim} collects the preliminary definitions and several results of a topological and dynamical nature, most of which are standard in the field. 
The core of the article is developed in the subsequent three sections, \ref{s.techresult}, \ref{s.conservative}, and \ref{s.disipative}. Section \ref{s.techresult} presents the more technical results on rotation sets and prime-end rotation numbers associated with annular continua, which are later used to prove Theorems A and B in Section \ref{s.conservative}, and Theorem C in Section \ref{s.disipative}. 
Finally, building on the theoretical results of the paper, Section \ref{s.numerics} provides computer-assisted proofs of the existence of chaos for maps within well-known families.

\subsection{Acknowledgment}
We would like to thank the referees for their suggestions, which made this version much easier to read and allowed us to greatly improve the bounds on rotational difference in Theorem C.

\section{Topological and Dynamical preliminaries}\label{s.prelim}

In this section we establish the required basic notions. Throughout the text, $\N$ denotes the set of positive integers.

\subsection{Topological Preliminaries}

The \emph{annulus} is modeled in this work as
\[
\A = \T^1 \times \R, \quad \text{where } \T^1 = \R / \sim,\ \ x \sim y \ \text{if and only if } x - y \in \Z.
\]
Denote by $\mathrm{pr}_i : \R^2 \to \R$, $i=1,2$, the projections onto the first and second coordinates, respectively. Define $\pi : \R^2 \to \A$ by $\pi(x,y) = ([x],y)$, \textcolor{black}{ where $[x]$ is the equivalence class of $x$ by the relation $\sim$}. 
Then $\R^2$, together with $\pi$, is the universal covering of $\A$.

Note that $\pi$ is a local homeomorphism satisfying $\pi(x+1,y) = \pi(x,y)$ for all $(x,y) \in \R^2$. \textcolor{black}{The Euclidean distance on $\R^2$ induces a distance in $\A$ via this covering, where
$$d_{\A}(([x_1],y_1), ([x_2],y_2))=\min_{x_2'\sim x_2}\sqrt{(x_2'-x_1)^2+(y_2-y_1)^2}.$$}

A set $B$ in $\A$ is said to be \emph{filled} if its complement has no bounded connected component. \textcolor{black}{The \emph{filling of $B$}, denoted $\mathrm{Fill}(B)$, is the union of $B$ with all bounded connected components of its complement. A set $B$ in $\A$ } is said to be \emph{inessential} if it is contained in an open topological disk, otherwise it is \emph{essential}. An essential compact subset of $\A$ separates the ends of $\A$\textcolor{black}{, i.e., the two ends of $\A$ have neighborhoods contained in two different connected components of its complement.} An \emph{open essential sub-annulus} in $\A$ is a topological open annulus that is essential. In this case the inclusion map induces an injective homeomorphism of the respective fundamental groups.
Note that the pre-image of any essential sub-annulus $U$ by $\pi$ gives a connected region of $\R^2$ which is invariant under any horizontal integer translation. Such a region is naturally
called \emph{lift} of $U$ and denoted $\tilde{U}$.

It holds that given a homeomorphism $h$ from an essential sub-annulus $U$ of $\A$ whose image is another essential sub-annulus $V$ of $\A$, we can define a family of lifts 
$H:\tilde{U}\to\tilde{V}$ verifying
$$h\circ\pi=\pi\circ H.$$

We denote by $\homeo$ the space of homeomorphisms in $\A$ which are
isotopic to the identity.

\subsubsection{Rays and Accessible Points.}

An \emph{arc} in a topological space is both a continuous injective function defined on a non-degenerate closed interval $I$ whose image lies in the prescribed topological space and also the image of the curve. We will say that an arc joins points $x$ and $y$ if $x$ and $y$ are the images of the endpoints of $I$.

Given an open and connected set $U\subset\A$, a continuous and injective function $r:[0,+\infty)\to U$ is called a \emph{ray in $U$} if \textcolor{black}{
$$\bigcap_{n>0} \overline{r([n,+\infty))}\subset \partial U.$$} Moreover, we say $x\in\partial U$ is \emph{accessible} from $U$ if there exists some ray $r$ in $U$ so that $\lim_{t\to+\infty}r(t)=x$. In this case, we also say that the ray $r$ \emph{lands} in $x$.

\subsubsection{Annular Continua and Circloids.}
In any topological space a \emph{continuum} is a nonempty compact and connected set.

An essential continuum in $\A$ is then a continuum that separates the two ends of $\A$. An \emph{essential annular continuum} is a filled essential continuum of $\A$, that is, an essential continuum whose complement is given by two connected components, each one being a neighborhood of an end of $\A$.  A \emph{circloid} is an essential annular continuum with the additional property that no proper sub-continuum is also an essential annular continuum. These sets always exist in a prescribed annular continuum, due to a standard Zorn argument, and will be fundamental tools for our purposes. \textcolor{black}{An \emph{essential co-frontier} is  a circloid with empty interior}. In particular, a co-frontier is the boundary of any of the connected components of its complement.

\smallskip

Given an essential annular continuum $\mathcal{K}$, we denote by $\mathcal{U}^+(\mathcal{K})$ the connected component of its complement that contains $\pi(\R\times{[M, +\infty)})$ for some sufficiently large $M>0$. Likewise, we denote
by $\mathcal{U}^-(\mathcal{K})$ the connected component of its complement that contains $\pi(\R\times{(-\infty, -N]})$ for a sufficiently large $N>0$.

A first example of a circloid displaying a complicated topology was given by the so-called \emph{pseudo-circle}, originally constructed by Anderson \cite{andersonpc}; see also \cite{bingpc}. Another important example for our dynamical applications arises from the unstable lamination of a \emph{Smale horseshoe}, which wraps around the annulus $\mathbb{A}$ and can be constructed so that this lamination is a global attractor. This model can even be realized with an Axiom~A system (i.e., one that is hyperbolic on its non-wandering set). By locally modifying a saddle fixed point inside the attractor, one can produce a new dynamical system that creates a source within the global attractor, in a way analogous to the so-called \emph{Derived from Anosov} example. The basin of repulsion of this source is then contained in the essential global attractor given by an annular continuum $\mathcal{A}$. If the construction is carried out carefully, $\mathcal{A}$ remains a circloid. Note that in such a case $\mathcal{A}$ is not given by $\partial \mathcal{U}^-(\mathcal{A}) \cap \partial \mathcal{U}^+(\mathcal{A})$, since the basin of repulsion lies in the complement of this set. In this way, we obtain a dynamically meaningful example of a circloid with non-empty interior, and hence one that is not a co-frontier.

\smallskip

Given two circloids $\mathcal{C}_1, \mathcal{C}_2$  in $\A$, we say that
$\mathcal{C}_2$ is \emph{above} $\mathcal{C}_1$ and denote
$\mathcal{C}_1\preccurlyeq \mathcal{C}_2$ iff $\mathcal{U}^-(\mathcal{C}_1)\subset \mathcal{U}^-(\mathcal{C}_2)$. This defines a partial order on the set of circloids,
and in case $\mathcal{C}_1\preccurlyeq \mathcal{C}_2$ and $\mathcal{C}_1\neq\mathcal{C}_2$
we denote $\mathcal{C}_1\prec\mathcal{C}_2$.

It holds that for any annular continuum $\mathcal{K}$, each of the regions $\mathcal{U}^+(\mathcal{K})$ and $\mathcal{U}^-(\mathcal{K})$ contains in its boundary a unique circloid, denoted $\mathcal{C}^+$ and $\mathcal{C}^{-}$ respectively. Moreover, it is easy to see that whenever $\mathcal{C}^+=\mathcal{C}^-$, then $\mathcal{K}$ contains a unique circloid.

An important property of circloids, which does not hold for general annular continua, is the following: for every point $x \in \partial \mathcal{C}$, one has
\[
x \in \partial\mathcal{U}^-(\mathcal{C}) \cap \partial\mathcal{U}^+(\mathcal{C}).
\]

Although these properties are straightforward to verify, we refer the reader to \cite{jager} for a detailed treatment.


\subsubsection{Moore quotient maps.}

A family $\mathcal{E}$ of continua on $\A$ is an upper semicontinuous family if  the Hausdorff limit of any converging sequence of elements in $\mathcal{E}$ is contained in some element of the family. We recall that, by the Moore decomposition Theorem (\cite{moore}), if $\mathcal{E}$ is an upper semicontinuous family of inessential filled continua of $\A$ such that $\mathcal{E}$ is a partition of $\A$, and if $\sim$ is the equivalence relation in $\A$ defined by $x\sim y$ \textcolor{black}{whenever} $x,y$ belong to the same element in the family $\mathcal{E}$, then the quotient space $\A/_\sim$ is homeomorphic to $\A$. Furthermore, the map that associates to each point its equivalence class can be naturally viewed as a continuous and onto map 
$q:\A\to\A$, which we refer to as the \emph{Moore map} associated to the prescribed upper semicontinuous family. Moreover, the map $q$ lifts to a map $\widetilde q:\R^2\to\R^2$ commuting with the translation $T(\widetilde x)=\widetilde x+(1,0)$. \textcolor{black}{Finally, if $\mathcal{E}$  is an upper semicontinuous family of inessential filled continua of $\A$ such that $\mathcal{E}$ is a partition of some closed set $F\subset \A$, then one can consider the upper semicontinuous family $\mathcal{E}'$ where the elements of $\mathcal{E}'$ are either the elements of $\mathcal{E}$ or the singletons $\{x\}$ with $x$ not in $F$. As $\mathcal{E}'$ is a partition of $\A$, it has a Moore map associated to it. We refer to this map also as the Moore map associated to $\mathcal{E}$.}


\subsection{Dynamical Preliminaries}\label{ss.dynamicalpreliminaries}
Here we state three fundamental concepts for our purposes, namely \emph{Rotation Sets},
\emph{Prime-end Rotation Numbers}, \emph{Rotational Horseshoes}.

\subsubsection{Rotation Sets.}

As already introduced, the set of maps $f:\A\to\A$ that are homeomorphisms in the isotopy class
of the identity is denoted by $\homeo$. The lifts of any element
$f\in\homeo$ form a family of orientation preserving homeomorphisms
in $\R^2$ verifying that $F(x+(1,0))=F(x)+(1,0)$ for all $x\in\R^2$, where $F$
is any of such lifts.
The difference of any two of these lifts is given by an integer horizontal
translation. Fixing one of them, say $F$, allows one to associate a rotation set \textcolor{black}{
$$\rho(F)=\bigcap_{n\ge 1}\overline{\bigcup_{\stackrel{i\ge n}{x\in\R^2}}\frac{\mathrm{pr}_1(F^{i}(x)-x)}{i}}.$$
The set $\rho(F)$ coincides with the set of values $r$ for which there exists an increasing sequence of positive integers $(n_i)$ and a sequence $(x_i)$ with $x_i \in \R^2$, such that 
$$r=\lim_{i\to+\infty} \frac{\mathrm{pr}_1(F^{n_i}(x_i)-x_i)}{n_i}.$$}

Changing the lift of $F$ modifies the rotation set by an integer horizontal translation of $\R^2$.
\textcolor{black}{Also, since $\R^2$ is connected, for every positive integer $i$, the set $\bigcup_{x\in\R^2}\frac{\mathrm{pr}_1(F^{i}(x)-x)}{i}$ is a nonempty interval $I_i$, and a standard argument from rotation theory shows that $I_{pi}$ is a subset of $I_i$ for all $p\in\N$. Thus $I_{ij}$ is a subset of both $I_i$ and $I_j$ for all $i,j\in\N$. From this one deduces that for every $n\in\N$, $\bigcup_{i\ge n}I_i$ is an interval. Thus $\rho(F)$ is the nested intersection of closed intervals, and so it is} always a closed interval (possibly unbounded or empty).

\smallskip

When we have a compact invariant set $X\subset \A$ for an element $f\in\homeo$
it is possible to define $\rho(X,F)$ the rotation set relative to $X$ for any lift $F$ of $f$ \textcolor{black}{ as the set of numbers $r$ for which there exists an increasing sequence of positive integers $(n_i)$ and a sequence $(x_i)$ with $x_i \in \pi^{-1}(X)$, such that 
$$r=\lim_{i\to+\infty} \frac{\mathrm{pr}_1(F^{n_i}(x_i)-x_i)}{n_i},$$
or, equivalently, 
$$\rho(X,F)=\bigcap_{n\ge 1}\overline{\bigcup_{\stackrel{i\ge n}{x\in\pi^{-1}(X)}}\frac{\mathrm{pr}_1(F^{i}(x)-x)}{i}}.$$}

This kind of set is also found in the literature denoted by $\rho_X(F)$. Whenever $X$ is a continuum, it holds by a similar argument as the one presented for the whole rotation set of $F$, that $\rho(X,F)$ is a compact interval. In the case where $X$ is a singleton $X=\{x\}$, we also denote
$\rho(X,F)=\rho(x,F)$. Note that in this situation, $x$ is a fixed point for $f$, \textcolor{black}{since $\{x\}$ is invariant, and thus there exists $p\in\Z$ such that $F(\tilde x)=\tilde x+(p,0)$ for any $\tilde x\in\pi^{-1}(x)$. Hence $\rho(x,F)=p$ must be an integer.} 

We say that two fixed points $x_0,x_1$ of $f$ \emph{have different rotation numbers} whenever $\rho(x_0,F)\neq\rho(x_1,F)$ for a lift $F$ of $f$. This last property does not depend upon the choice of lift $F$ of $f$. We call \emph{the rotational difference of $x_0,x_1$}
the integer number
$$\rho=\rho(x_1,F)-\rho(x_0,F)$$
which we will usually consider positive.

\subsubsection{Prime-End Rotation Numbers.}
Let $\mathbb{S}^2=\A\cup\{-\infty,+\infty\}$ be the usual ends-compactification of $\A$, a space homeomorphic to the $2$-sphere. Let $\mathcal{K}$ be an essential annular continuum in $\A$, and consider the open topological disk
$$D^+=\mathcal{U}^{+}(\mathcal{K})\cup\{+\infty\}\subset \mathbb{S}^2.$$
{\color{black} We briefly recall Carath\'eodory's theory of prime ends for $D^+$, following Section~3 of \cite{korolena}; see also \cite{Caratheodory}.
 
A \emph{cross-cut} of $D^+$ is the image of a simple arc $\gamma:(0,1)\to D^+$ that extends to an arc $\overline{\gamma}:[0,1]\to \mathrm{cl}(D^+)$ joining two distinct points of $\partial D^+$. The set $D^+\setminus\gamma$ has exactly two connected components, each of which is an open topological disk whose boundary meets $\partial D^+\setminus\overline{\gamma}$; these two components are called the \emph{cross-sections} determined by $\gamma$. 

A \emph{chain of cross-sections} is a sequence $(D_n)_{n\geq 1}$ of cross-sections such that $D_{n+1}\subset D_n$ for every $n\geq 1$, and such that the corresponding cross-cuts are pairwise disjoint. Given a cross-section $D$, we say that a chain $(D_n)_{n\geq 1}$ \emph{divides} $D$ if there exists $n_0>0$ such that $D_n$ is a subset of $ D$ for $n>n_0$. A chain $\mathcal{C}$ \emph{divides} a chain $\mathcal{C}'=(D'_n)_{n\geq 1}$ if, for every $n\geq 1$, there exists $m\geq 1$ such that $D_m\subset D'_n$, and two chains are \emph{equivalent} if each one divides the other. Finally, a chain $\mathcal{C}$ is a \emph{prime chain} if it divides every chain that divide $\mathcal{C}$. A \emph{prime end} of $\mathcal{U}^{+}(\mathcal{K})$ is an equivalence class of prime chains, and we denote the set of prime ends by $\mathcal{S}^+$.
 
The disjoint union $D^+\cup\mathcal{S}^+$ is endowed with the topology generated by the open subsets of $D^+$ together with the sets of the form $D\cup E(D)$, where $D$ is a cross-section of $D^+$ and $E(D)$ denotes the set of prime ends whose associated chains divides $D$. The resulting space is called the \emph{prime-end compactification} of $D^+$, and the fundamental result of the theory (see Theorem~3.3 of \cite{korolena}) shows that $\mathcal{S}^+$ is homeomorphic to a circle and $D^+\cup\mathcal{S}^+$ is homeomorphic to the closed unit disk.
 
The compactification also has the following properties
\begin{itemize}
	\item Every ray in $\mathcal{U}^{+}(\mathcal{K})$ landing at a point $x\in\partial\hspace{1pt}\mathcal{U}^{+}(\mathcal{K})$ converges, in the space $D^+\cup\mathcal{S}^+$, to a prime end $\xi\in\mathcal{S}^+$; a prime end arising in this way is said to be \emph{accessible}, and the point $x$ is uniquely determined by $\xi$.
	\item If $r$ and $r'$ are rays in $\mathcal{U}^{+}(\mathcal{K})$ landing at two distinct points $x\neq x'$ of $\partial \mathcal{U}^{+}(\mathcal{K})$, then the corresponding prime ends $\xi$ and $\xi'$ are distinct.
\end{itemize}
Moreover, the set of accessible points is dense in $\partial\mathcal{U}^{+}(\mathcal{K})$, and the set of accessible prime ends is dense in $\mathcal{S}^+$, see Subsection~3.1 of \cite{korolena} for these facts.

If $f\in\homeo$ leaves $\mathcal{K}$ invariant, then the extension of $f$ to $\mathbb{S}^2$ preserves $D^+$ and maps cross-cuts of $D^+$ to cross-cuts and cross-sections to cross-sections. Thus, $f|_{D^+}$ extends to a homeomorphism of $D^+\cup\mathcal{S}^+$. 
The last property implies that, once an orientation of $\mathcal{S}^+$ is fixed, we can define a Poincar\'e rotation number for the restriction to $\mathcal{S}^+$ of the extension of $f|_{D^+}$. As we will need to make this work with a chosen lift $F$ of $f$, we describe a concrete but non-canonical realization of the prime-end compactification, which is the model we work with in the remainder of the article.
 
Endow $\mathbb{S}^2$ with a complex structure. By the Riemann mapping theorem, there exists a conformal homeomorphism $h^*$ from $D^+$ onto the open unit disk $\D\subset\R^2$ such that $h^*(+\infty)=0$. Declaring that a sequence $(x_n)$ in $D^+$ converges to a point $\xi\in\partial\D$ whenever $(h^*(x_n))$ converges to $\xi$, one obtains a compactification of $D^+$ by the circle $\partial\D$. This compactification depends on the choice of $h^*$; however, Carath\'eodory's theory shows that $h^*$ extends to a homeomorphism between $D^+\cup\mathcal{S}^+$ and the closed disk $\overline{\D}$. Thus, this compactification is homeomorphic to the canonical prime-end compactification, and in particular it satisfies the two properties listed above. We can therefore identify $\mathcal{S}^+$ with $\partial\D$.
 
Let $h_+$ be the restriction of $h^*$ to $\mathcal{U}^{+}(\mathcal{K})$, a homeomorphism onto $\D\setminus\{0\}$. Fixing an identification of $\D\setminus\{0\}$ with $\T^1\times(0,1)$ that sends $\partial\D$ to $\T^1\times\{1\}$, we regard $h_+$ as a homeomorphism
$$h_+:\mathcal{U}^{+}(\mathcal{K})\to\T^1\times(0,1),$$
verifying that any lift
$$H_+:\widetilde{\mathcal{U}}^{+}(\mathcal{K})\to \R\times (0,1)$$
commutes with the integer horizontal translations of $\R^2$, that is, with the Deck transformations of $\A$. With this identification, the prime ends of $\mathcal{U}^{+}(\mathcal{K})$ are the points of $\T^1\times\{1\}$, and the properties listed above read as follows: rays in $\mathcal{U}^{+}(\mathcal{K})$ landing at points of $\mathcal{K}$ are sent by $h_+$ to rays landing at points of $\T^1\times\{1\}$, and rays landing at different points of $\mathcal{K}$ are sent to rays landing at different prime ends.
 
Assume now that $f\in\homeo$ leaves $\mathcal{K}$ invariant. The induced homeomorphism
$$f^{\mathrm{end}}_{+}:\T^1\times(0,1)\to \T^1\times(0,1), \qquad f^{\mathrm{end}}_{+}= h_+\circ f\circ (h_+)^{-1},$$
is homotopic to the identity and, by the extension property above, extends to a homeomorphism of $\T^1\times (0,1]$, which we still denote by $f^{\mathrm{end}}_{+}$.
 
Given a lift $F$ of $f$, there exists a unique lift $F^{\mathrm{end}}_{+}$ of $f^{\mathrm{end}}_{+}$ such that any lift $H_+$ of $h_+$ conjugates the restriction of $F$ to $\widetilde{\mathcal{U}}^{+}(\mathcal{K})$ with the restriction of $F^{\mathrm{end}}_{+}$ to $\R\times(0,1)$. The restriction of $F^{\mathrm{end}}_{+}$ to $\R\times\{1\}$ is the lift of an orientation preserving circle homeomorphism, and hence the Poincar\'e rotation number
$$\lim_{n\to +\infty}\frac{\mathrm{pr}_1\left(( F^{\mathrm{end}}_{+})^n(x,1)-(x,1)\right)}{n}$$
exists and does not depend on $x$. This number is called the \emph{upper prime-end rotation number} of $F$ and is denoted by $\rho^{\mathrm{end}}_{+}(\mathcal{K},F)$.
 
The symmetric construction, carried out with the open topological disk $D^-=\mathcal{U}^{-}(\mathcal{K})\cup\{-\infty\}$, produces the corresponding maps $h_-$, $f^{\mathrm{end}}_{-}$ and $F^{\mathrm{end}}_{-}$, and yields the \emph{lower prime-end rotation number} $\rho^{\mathrm{end}}_{-}(\mathcal{K},F)$.
}
 
In \cite{matsumoto,luishernandez}, it is proved that both quantities belong to the rotation set of $\mathcal{K}$ with respect to $F$, that is,
\[
\rho^{\mathrm{end}}_{+}(\mathcal{K},F) \in \rho(\mathcal{K},F) \quad\text{and}\quad \rho^{\mathrm{end}}_{-}(\mathcal{K},F) \in \rho(\mathcal{K},F).
\]

We will also consider prime-end compactifications of annular regions. If $A\subset \A$ is an open essential and bounded sub-annulus of $\A$, then $\partial A$ has two connected components: we denote by $\partial_{+} A$ the one contained in the closure of the connected component of $\A\setminus A$ that is a neighborhood of the upper end of $\A$, and by $\partial_{-} A$ the other one. The filling $\mathcal{K}^{+}$ of $\partial_{+} A$ and the filling $\mathcal{K}^{-}$ of $\partial_{-} A$ are essential annular continua, and
$$A=\mathcal{U}^{+}(\mathcal{K}^{-})\cap\mathcal{U}^{-}(\mathcal{K}^{+}).$$
Proceeding as before, one defines the prime-end compactification of $A$, obtained by adding to $A$ two disjoint circles, the prime-end circles $\T^1_{-}$ and $\T^{1}_+$, with a topology such that the resulting space is homeomorphic to $\T^1\times[0,1]$. As before, if $g\in\mathrm{Homeo}_0(\A)$ leaves $A$ invariant, then it induces an orientation preserving homeomorphism $g^{\mathrm{end}}$ of $\T^1\times [0,1]$ whose restriction to $\T^1\times (0,1)$ is conjugate to $g$ under some homeomorphism $h:A\to \T^1\times (0,1)$ whose lifts commute with the Deck transformations of $\A$.
 
If $G$ is a lift of $g$, then there exists an associated lift $G^{\mathrm{end}}$ of $g^{\mathrm{end}}$ such that, if $H$ lifts $h$, then $H\circ G=G^{\mathrm{end}}\circ H$.
 
One defines the upper (resp.\ lower) prime-end rotation number of $A$, denoted $\rho^{\mathrm{end}}_+(A, G)$ (resp.\ $\rho^{\mathrm{end}}_{-}(A,G)$), by considering the restriction of $G^{\mathrm{end}}$ to $\R\times\{1\}$ (resp.\ $\R\times\{0\}$) and then proceeding as before. One verifies that
$$\rho^{\mathrm{end}}_{-}(A,G)=\rho^{\mathrm{end}}_{+}(\mathcal{K}^{-},G)\quad\mbox{ and }\quad\rho^{\mathrm{end}}_{+}(A,G)=\rho^{\mathrm{end}}_{-}(\mathcal{K}^{+},G).$$

\subsubsection{Rotational Horseshoes.}

A \emph{topological horseshoe} for a homeomorphism $g\in\mathrm{Homeo}(\A)$ is a $g$-invariant compact subset $\Lambda$ such that the restriction of $g$ to $\Lambda$ is semi-conjugated to the Bernoulli shift $\sigma$ on $\Sigma_M=\{1,\hdots, M\}^{\Z}$ for some integer $M\ge 2$, and such that, if $h$ is the semiconjugacy map, and if $w\in\Sigma_M$ is a periodic point for $\sigma$, then $h^{-1}(w)$ contains a periodic point of the same minimal period as $w$. In particular, $h_{\mathrm{top}}(g)>0$, where $h_{\mathrm{top}}(g)$ is the topological entropy of $g$. A \emph{rotational horseshoe} is a horseshoe $\Lambda$ with nontrivial rotational behavior, that is,  if $G$ is a lift of $g$, then $\rho(\Lambda, G)$ is not a singleton. 

A homeomorphism $g \in\mathrm{Homeo}_0(\A)$ is said to \emph{exhibit rotational chaos} if some power of $g$ has a rotational horseshoe.
The existence of rotational horseshoes for $C^0$ annular dynamics has been established under mild assumptions
over the past decade. Results in this direction, such as \cite{paposa, lecalveztal}, have provided key motivation for the present work. 
\section{Results on Rotation Theory of Annular Continua}\label{s.techresult}

This section presents two main technical results on prime-end rotation numbers of invariant annular continua. The first, Theorem~\ref{t.proximal} shows cases where the (topological) proximity of an annular continuum to a prescribed fixed point implies a restriction on one of its prime-end rotation numbers. The second, Theorem~\ref{t.inter} deals with restrictions for the rotational invariants when two circloids $\mathcal{C}^-\prec\mathcal{C}^+$ intersect. 

After stating the two main results as well as a few known results that will be needed,  Subsection~\ref{sub.3.1} contains the proof of the first result, which will be useful in both Section 4 and Section 5, and Subsection \ref{sub.3.2} proves the second result, which will be needed in Section 5. We finish with Subsection~\ref{sub.3.3}, which contains other propositions in a similar spirit both in statement and proof,  potentially useful for future works. As these  are unnecessary for obtaining theorems A, B and C, the whole subsection can be skipped if the reader so chooses. 

For the first result, we introduce some notations. Given an annular continuum $\mathcal{K}$ invariant under
$f:\A\to\A$, a fixed point $x$ of $f$ and an integer $q\ge 1$, we say that $x$ is $q$-proximal to $\mathcal{K}$ whenever there exists a connected open neighborhood $U$ of $x$ such that:
\begin{itemize}
\item  $U$ intersects $\mathcal{K}$,
\item $\bigcup_{i=0}^{q}f^{i}(U)$ is inessential.
\end{itemize}

\begin{thm}\label{t.proximal}
Let $F$ be a lift of $f\in\mathrm{Homeo}_0(\A)$, let $\mathcal{K}\subset\A$ be an invariant annular continuum for $f$ and assume there exists a fixed point $x$ of $f$ in $\mathcal{U}^{+}(\mathcal{K})$ which is $q$-proximal to $\mathcal{K}$.
Then 
\[
\mid \rho_+^{\mathrm{end}}(\mathcal{K},F)-\rho(x,F)\mid \leq\tfrac{1}{q}.
\]
The analogous statement holds for a $q$-proximal fixed point in $\mathcal{U}^{-}(\mathcal{K})$.
\end{thm}

The second main result of this section follows.

 \begin{thm}\label{t.inter}
Let $F$ be a lift of $f\in\mathrm{Homeo}_0(\A)$,  and let $\mathcal{C}^-\prec\mathcal{C}^+$ be two invariant circloids for $f$ 
such that $\mathcal{C}^-\cap\mathcal{C}^+\neq\emptyset$. Then
\[
\rho(\mathcal{C}^-\cap \mathcal{C}^+,F)
=
\{\rho^{\mathrm{end}}_-(\mathcal{C}^+,F)\}
=
\{\rho^{\mathrm{end}}_+(\mathcal{C}^-,F)\}.
\]
\end{thm}

For the proof of these results we will make use of two previously known results. The first is a classical lemma in surface topology. A proof can be obtained using the theory of Brouwer homeomorphisms:

\begin{lemma}[Lemma 4.1 in \cite{koropass}]\label{l.intersectioncontinua}
Let $A,B$ be two inessential continua in $\A$ and $\tilde{A},\tilde{B}$ respective lifts.
Then whenever $\tilde{A}$ meets $\tilde{B}+(k_0,0)$ and $\tilde{B}+(k_1,0)$
with $k_0<k_1$ integers, it holds
$$\tilde{A}\cap\left(\tilde{B}+(k,0)\right)\not =\emptyset \mbox{ for all }k\in\Z\cap[k_0,k_1].$$
\end{lemma}

The second result is a direct consequence of a theorem by Hernand\'ez-Corbato (Theorem 1 from \cite{paperluis}).

\begin{prop}\label{propluis}
Let $F$ be a lift of $f\in\mathrm{Homeo}_0(\A)$, let $\mathcal{K}$ be an invariant annular continuum, and denote by 
$\mathrm{Acc}^+(\mathcal{K})$ the set of accessible points from $\mathcal{U}^{+}(\mathcal{K})$. 
Then one of the following holds.
\begin{itemize}
\item[(a)] For any lift $\tilde{x}$ of any $x\in \mathrm{Acc}^+(\mathcal{K})$ it holds
$$\rho^{\mathrm{end}}_+(\mathcal{K},F)=\lim_{n\to+\infty}\tfrac{1}{n}\mathrm{pr}_1\left(F^n(\widetilde x)-\widetilde x\right),$$
\item[(b)]  For any lift $\tilde{x}$ of any $x\in \mathrm{Acc}^+(\mathcal{K})$ it holds
$$\rho^{\mathrm{end}}_+(\mathcal{K},F)=\lim_{n\to+\infty}\tfrac{1}{n}\mathrm{pr}_1\left(\widetilde x-F^{-n}(\widetilde x)\right).$$
\end{itemize}
\end{prop}
\textcolor{black}{A similar result holds for $\rho^{\mathrm{end}}_-(\mathcal{K},F)$ using the set of  accessible points from $\mathcal{U}^{-}(\mathcal{K})$.} 

\textcolor{black}{A direct consequence is given by the following corollary. 
\begin{cor}
Let $F$ be a lift of $f\in\homeo$ and $\mathcal{K}$ an invariant annular continuum. Then both $\rho^{\mathrm{end}}_+(\mathcal{K},F)$ and $\rho^{\mathrm{end}}_-(\mathcal{K},F)$ belong to $\rho(\mathcal{K},F)$. 
\end{cor} 
\begin{proof}
We show that $\rho^{\mathrm{end}}_+(\mathcal{K},F)$ lies in $\rho(\mathcal{K},F)$, the other case being similar. If item (a) of Proposition~\ref{propluis} holds, then the result is immediate from the definition of $\rho(\mathcal{K},F)$ by taking $\tilde{x}$ lifting a point in $\mathrm{Acc}^+(\mathcal{K})$ . If (b) holds, then one observes that, taking $\tilde{x}$ lifting a point in $\mathrm{Acc}^+(\mathcal{K})$, and defining $\tilde{y}_n=F^{-n}(\tilde{x})$, for all positive integers $n$ the point $\tilde{y}_n$ lifts a point of $\mathcal{K}$, and 
$$\lim_{n\to+\infty}\tfrac{1}{n}\mathrm{pr}_1\left(F^{n}(\widetilde y_n)-\widetilde y_n\right)=\lim_{n\to+\infty}\tfrac{1}{n}\mathrm{pr}_1\left(\widetilde x-F^{-n}(\widetilde x)\right)=\rho^{\mathrm{end}}_+(\mathcal{K},F),$$
so that again, by the definition of rotation set, the result holds. 
\end{proof}
}

We will also need the following lemma on extension of rays in the prime-end compactification:
\begin{lemma}\label{stilline}
Let $f\in\mathrm{Homeo}_0(\A)$, $\mathcal{K}$ be an $f$-invariant annular continuum, and assume that $\sigma:[0,1]\to \A$ verifies that $\sigma\mid_{[0,1)}$ is a ray in $\mathcal{U}^{+}(\mathcal{K})$ landing on a point $x\in\mathcal{K}$. Let $h_{+}:\mathcal{U}^{+}\to\T^1\times(0,1)$ be the homeomorphism associated to the  prime-end compactification of $\mathcal{U}^{+}(\mathcal{K})$, and let $\beta:[0,1]\to\T^1\times(0,1]$ be  the continuous extension of $h_+(\sigma\mid_{[0,1)})$.  If for some $q>0$, $\sigma\cup f^q(\sigma)$ is inessential, then $\beta\cup (f^{\mathrm{end}}_{+})^q(\beta)$ is also inessential.
\end{lemma}
\begin{proof}

Since $\sigma\cup f^q(\sigma)$ is inessential, there exists a proper and injective map $\Gamma:\R\to\A$ whose image is disjoint from $\sigma \cup f^q(\sigma)$, and such that $\lim_{t\to\pm \infty}\Gamma(t)=\mp \infty$. Let $t_1$ be the first value such that $\Gamma(t)\notin \mathcal{U}^{+}(\mathcal{K})$. Then 
$$r:[t_1-1,t_1)\to \mathcal{U}^{+}(\mathcal{K}),\, r(t)=\Gamma(t)$$ is a ray landing on a point $x$ which is neither 
$\sigma(1)$ nor $f^q(\sigma(1))$. 

As discussed in the preliminaries, $h_+(r)$ lands at a prime-end $\xi$ that is neither $\beta(1)$ nor $(f^{\mathrm{end}}_+)^q(\beta(1))$. Thus, the set 
$$\alpha:=h_+(\Gamma(-\infty,t_1)) \cup \{\xi\}$$
is disjoint from $\beta \cup (f^{\mathrm{end}}_+)^q(\beta)$, so 
$\beta \cup (f^q)^{\mathrm{end}}(\beta)$ is contained inside a closed topological disk
of the space $\left(\T^1\times(0,1]\right)\setminus\alpha$. Thus, it is contained in an open topological disk of $\T^1\times(0,1]$, showing the result.
\end{proof}

\subsection{Proof of Theorem \ref{t.proximal}}\label{sub.3.1}

If an essential simple closed curve $\gamma$ is invariant by a  homeomorphism $f\in\mathrm{Homeo}_0(\A)$, and if $F$ is a lift of $f$, then the action of $f$ on $\gamma$ is conjugated to that of a homeomorphism of the circle, and it follows from classical Poincar\'e's rotation number theory that every point in $\gamma$ must have the same rotation number for $F$, which is $\rho(\gamma, F)$. The following lemma is a simpler version of Theorem~\ref{t.proximal}, useful to understand the main idea.

\begin{lemma}~\label{lemmabpatrice}
Let $F$ be a lift of $f\in\mathrm{Homeo}_0(\A)$, $\gamma$ be an $f$-invariant simple essential closed curve in $\A$, and $x$ be a fixed point of $f$ that is $q$-proximal to $\gamma$ for some integer $q\ge 1$. Then 
$$\mid \rho(x,F)-\rho(\gamma,F)\mid\leq \frac{1}{q}.$$
\end{lemma}
\begin{proof}
We may assume, with no loss in generality, that $F(\tilde x)=\tilde x$ whenever $\tilde x$ is a lift of $x$. We may also assume, by a coordinate change if necessary, that $\gamma=\pi(\R\times\{0\})$. If $x$ lies in $\gamma$ then the result follows as all points in $\gamma$ have the same rotation number. If not, let $U$ be a connected neighborhood of $x$ such that $U$ intersects $\gamma$, and such that $U\cup f^q(U)$ is inessential. Let $\sigma\subset U$ be a simple arc joining $x$ to a point $y$ in $\gamma$, and that is disjoint from $\gamma$ except for its endpoint. Let $\widetilde \sigma$ be a lift of $\sigma$, joining $\widetilde x$ to $\widetilde y=(a,0)$. If $F^q(\widetilde y)=(b,0)$, then we claim that  
$$a-1<b<a+1.$$ 

Indeed, assume for a contradiction that $b\geq a+1$. Note first that $b-a$ cannot be an integer as otherwise $\sigma\cup f^q(\sigma)$ is an essential set contained in  $U\cup f^q(U)$. Therefore we can write $b=a+p+\theta$ where $p$ is a positive integer and $0<\theta<1$. Let 
$$\tilde K_1 = \widetilde\sigma\cup F^q(\widetilde\sigma)\ ,\ \tilde K_2 = [a+\theta, a+1]\times \{0\}.$$ One verifies that  $\widetilde K_1 \cap \widetilde K_2$ is empty, as the only intersection points of $\widetilde K_1$ and $\R\times\{0\}$ are $\widetilde y$ and $F^q(\widetilde y)$. But $\widetilde K_1$ intersects both $\widetilde K_2+(-1,0)$ at $(a,0)$ and  $\widetilde K_2+(p,0)$ at $(b,0)$, a contradiction with Lemma~\ref{l.intersectioncontinua}. One shows likewise that $b>a-1$. Hence, by classical arguments of circle dynamics, one obtains that $\rho(\gamma, F^q)\in[-1,1]$. 
\end{proof}

\begin{proof}[Proof of Theorem~\ref{t.proximal}]
The proof is very similar to that of the previous lemma. Let $\widetilde x$ be a lift of $x$, and we assume, with no loss of generality, that $F(\widetilde x)=\widetilde x$. Let $\sigma:[0,1]\to U$ be a simple arc joining $x=\sigma(0)$ to a point $y=\sigma(1)$ in $\mathcal{K}$, and that is contained in $\mathcal{U}^{+}(\mathcal{K})$ except for $y$. Let $h_{+}$ be the inclusion of $\mathcal{U}^+(\mathcal{K})$ into its prime-end compactification. Let $\beta(t)$ be the continuous extension of the arc that satisfies $\beta(t)=h_+(\sigma(t)),\,0\le t<1$. 

As explained in the preliminaries concerning the prime-end compactification, $\xi=\beta(1)$ is an accessible prime-end, and $\beta\cup (f^{\mathrm{end}}_+)^q(\beta)$ is inessential by Lemma~\ref{stilline}. Choosing $\widetilde \beta$ a lift of $\beta$, and $F^{\mathrm{end}}_{+}$ the lift of $f^{\mathrm{end}}_{+}$ associated to $F$, then $F^{\mathrm{end}}_{+}$ fixes $\widetilde \beta(0)$. Let $\widetilde \beta(1)=(a,1)$. One shows, exactly as in Lemma~\ref{lemmabpatrice}, that if $(F^{\mathrm{end}}_{+})^q((a,1))=(b,1)$, then $a-1<b<a+1$, and the result follows.
\end{proof}

\subsection{Proof of Theorem \ref{t.inter}}\label{sub.3.2}

We begin with a lemma concerning the accessibility of points in the intersection of two circloids.

\begin{lemma}\label{lemmainersecaoacessivel}
Let $\mathcal{C}^-\prec\mathcal{C}^+$ be two circloids, and assume for $x\in\A$ that $\{x\}$ is a connected component of 
$\mathcal{C}^{-}\cap\mathcal{C}^{+}$. Then, $x$ is accessible from $\mathcal{U}^{-}(\mathcal{C}^{+})$ and is also accessible by $\mathcal{U}^{+}(\mathcal{C}^{-})$.
\end{lemma}
\begin{proof}
We show that $x$ is accessible from $\mathcal{U}^{-}(\mathcal{C}^{+})$, the other case being analogous.

Let $(\gamma_n)_{n\in\N}$ be a sequence of simple essential loops in $\mathcal{U}^{-}(\mathcal{C}^{-})$
 such that for all $n \in \N$,
\[
\gamma_{n+1} \subset \mathcal{U}^+(\gamma_n),
\]
and such that its Hausdorff limit  is $\partial \mathcal{C}^{-}$. Note that $x$ cannot be an interior point of $\mathcal{C}^{-}$, since, if it was also an interior point of $\mathcal{C}^{+}$ this would contradict the fact that $\{x\}$ is a connected component of $\mathcal{C}^{-}\cap\mathcal{C}^{+}$, and if $x$ were in $\partial \mathcal{C}^{+}$ this would imply that arbitrarily close to $x$ (and thus in $\mathcal{C}^{-}$) there are points of $\mathcal{U}^{+}(\mathcal{C}^{+})$ which contradicts $\mathcal{C}^-\prec\mathcal{C}^+$. Thus $x$ belongs to the limit of $\gamma_n$. 

Choose, for each $l$, a point $x_l$ in $\gamma_l$ that is closest to $x$, and consider, for each $k>0$ and each $l$ such that $x_l\in B(x,1/k)$, the arc $I_{k,l}$ which is the connected component of $\gamma_l \cap \overline{B(x,1/k)}$ containing $x_l$. We may assume, by using a diagonal process and replacing $(\gamma_n)$ with a subsequence, that for each fixed $k$ the sequence of arcs $I_{k,l}$ converges in the Hausdorff topology to some continua $L_k$ that contains both $x$ and a point in $\partial B(x,1/k)$ and is contained in $\mathcal{C}^{-}$. Note also that $I_{k+1,l}$ is a subarc of $I_{k,l}$.

 We claim that for every $k>0$, there exists $n_0:=n_0(k)$ such that, if  $n_0 \le n < m$, then there exists an arc $\sigma_{n,m,k}$ with
\[
\sigma_{n,m,k}\subset B(x,1/k) \cap \mathcal{U}^-(\mathcal{C}^+),
\]
connecting $I_{k,n}$ and $I_{k,m}$.
\medskip

Indeed, assume for a contradiction that there exists $k_0>0$ such that, if $B := B(x,1/k_0)$, then for every $j \in \N$
there exist integers $m(j)>n(j)\ge j$ for which the arcs $I_{k_0,n}$ and $I_{k_0,m}$
cannot be connected by an arc inside
\[
\mathcal{U}^-(\mathcal{C}^+) \cap B.
\]

If the Hausdorff distance between $I_{k_0+1,n}$ and $I_{k_0+1,m}$ is smaller than $\varepsilon<\frac{1}{(k_0+1)^{2}}$, the fact that these two arcs cannot be connected \textcolor{black}{by an arc inside $\mathcal{U}^{-}(\mathcal{C}^{+})\cap B$} implies that for every point $z\in I_{k_0+1,n}$ there exists a point $y(z)\in\mathcal{C}^{+}$ whose distance to $z$ is less than $\varepsilon$. \textcolor{black}{Indeed, by definition of the Hausdorff distance, there exists $z'$ in $I_{k_0+1,m}$ whose distance to $z$ is less than $\varepsilon$. Since the straight-line segment connecting $z$ and $z'$ is contained in $B$, the contradiction hypothesis shows that the segment must not be contained in $\mathcal{U}^-(\mathcal{C}^+)$, and thus intersects $\mathcal{C}^{+}$. One then just take $y(z)$ to be a point in this intersection}.  This implies, since the sequence $I_{k_0+1, n(j)}$ is convergent in the Hausdorff topology, that its limit is also contained in $\mathcal{C}^{+}$. But this limit contains $x$, is not reduced to a point and is also in $\mathcal{C}^-$, a contradiction.

Let then, for each $k\ge 1$, $\alpha_k$ be an arc in $B(x,1/k)\cap\mathcal{U}^{-}(\mathcal{C}^{+})$ joining the point $b_k\in I_{k,n_0(k)}$ to $a_{k+1}\in I_{k,n_0(k+1)}$, and let, for each $k\ge 2$, $\beta_k$ be the subarc of $I_{k-1,n_0(k)}$ joining $a_k$ to $b_k$. Denoting the concatenation of curves as a product, one verifies that the only accumulation point of the curve $$\Gamma=\prod_{k=2}^{\infty}\beta_k\alpha_k$$ is $x$ and that the curve is contained in $\mathcal{U}^-(\mathcal{C}^{+})$. This implies the existence of a ray in $\mathcal{U}^-(\mathcal{C}^{+})$ landing on $x$ whose image is contained in the image of $\Gamma$, showing the result. 
\end{proof}

\textcolor{black}{
\begin{obs}
The hypothesis in Lemma~\ref{lemmainersecaoacessivel} that the connected component of the intersection of the circloids is a singleton is fundamental. One can construct examples of circloids $\mathcal{C}^-\prec\mathcal{C}^+$ where their intersection $C$ is a closed inessential disk, but where both circloids ``spiral'' around $C$. Then no point of $C$ is  accessible from $\mathcal{U}^{-}(\mathcal{C}^{+})$ or by $\mathcal{U}^{+}(\mathcal{C}^{-})$. 
\end{obs}
}

The next result shows that we can, by suitable changes, work under the assumption that the circloids intersect in accessible points.

\begin{prop}\label{p.intercircacc}
Let $\mathcal{C}^- \prec \mathcal{C}^+$ in $\A$ be two circloids such that 
$\mathcal{C}^- \cap \mathcal{C}^+ \neq \emptyset$. Then the following holds:

\begin{enumerate} 
\item The connected components of $\mathcal{C}^- \cap \mathcal{C}^+$ are an upper semi-continuous family of filled inessential continua.

\item If $q:\A\to\A$ is the associated Moore map, then both $q(\mathcal{C}^-)$ and $q(\mathcal{C}^+)$ are circloids and $q(\mathcal{C}^-)\prec q(\mathcal{C}^+)$.

\item The image of any connected component of $\mathcal{C}^- \cap \mathcal{C}^+$ by $q$ is a singleton and therefore accessible both from $\mathcal{U}^{-}(q(\mathcal{C}^+))$ and from $\mathcal{U}^{+}(q(\mathcal{C}^-))$.
\end{enumerate}
\end{prop}
\begin{proof}
\textcolor{black}{ Since circloids are always filled sets, as both connected components of their complement are unbounded, one has that if a set $A$ belongs to a circloid, then its filling also belongs to it.} Thus, if $A$ is a closed set in $C=\mathcal{C}^{+}\cap\mathcal{C}^{-}$, then the filling of $A$ is also contained in this intersection. Therefore all connected components of $C$ are filled continua. Moreover, $C$ has no essential component as any filled essential subcontinua of a circloid must be the whole circloid, and we have assumed that $\mathcal{C}^{-}\not=\mathcal{C}^{+}$. Therefore every connected component of $C$ is a filled inessential continuum. Let $\mathcal{E}$ be the collection of connected components of $C$, and denote by $C_x$ the element of $\mathcal{E}$ that contains $x$. If $(x_n)_{n\in\N}$ is a sequence in $C$ converging to a point $\overline{x}$ (which also belongs to $C$ since $C$ is closed), and $C_{x_n}$ is a  converging sequence in the Hausdorff topology to a set $C_0$, then $C_0$ is connected and contained in $C$, therefore $C_0\subset C_{\overline{x}}$,  which shows that the family $\mathcal{E}$ is upper-semicontinuous. 

To show that $q(\mathcal{C}^{+})$ is still a circloid, note that $q(\mathcal{C}^{+})$ is an essential filled continuum as a Moore map sends filled sets to filled sets and sends essential continua to essential continua. Furthermore, if $D$ is an essential filled subcontinua of $q(\mathcal{C}^{+})$, then $q^{-1}(D)$ must also be an essential filled subcontinua of $\mathcal{C}^{+}$, and thus by minimality $D=q(\mathcal{C}^+)$.  One shows likewise that $q(\mathcal{C}^{-})$ is a circloid, and as both $q(\mathcal{U}^{-}(\mathcal{C}^{-}))=\mathcal{U}^{-}(q(\mathcal{C}^-))$ and $q(\mathcal{U}^{-}(\mathcal{C}^{+}))=\mathcal{U}^{-}(q(\mathcal{C}^+))$, $q(\mathcal{C}^{-})\prec q(\mathcal{C}^{+})$.

Finally, the Moore map is monotone, that is, the preimage of every connected set is connected. Hence, if there exists a non-trivial connected set $A \subset q(C)$, then by definition of $q$, the set $q^{-1}(A)$ is connected and contained in $\mathcal{C}^- \cap \mathcal{C}^+$. Moreover, it must contain at least two distinct elements of $\mathcal{E}$, which is impossible.

\end{proof}

\begin{proof}[Proof of Theorem \ref{t.inter}]
Denote as before $C=\mathcal{C}^{+}\cap\mathcal{C}^{-}$, and $q$ the associated Moore map. 

Note that the map $q$ commutes with $f$ (and $\widetilde q$ commutes with $F$), so there exists an induced $g\in\mathrm{Homeo}_0(\A)$ such that $g(q(x))=q(f(x))$, and a lift $G$ such that $G(\widetilde q(\widetilde x))=q(F(\widetilde x))$. 

Let
$$\Delta:\A\to\R,\,\Delta(y')=\mathrm{pr}_1(G(\widetilde y')-\widetilde y')$$
 where $\widetilde y'$ is any lift of $y'$. As $q(C)$ is a compact subset invariant by $g$ there exists at least one ergodic measure for $g$ supported on $q(C)$. For $\mu$ any such measure, a $\mu$-typical point $y'$ and any lift $\widetilde y'$ of $y'$, if $\rho_\mu=\int \Delta d\mu$, then
$$\lim_{n\to+\infty}\frac{1}{n}\mathrm{pr}_1\left(G^n(\widetilde y')-\widetilde y'\right)=\lim_{n\to+\infty}\frac{1}{n}\mathrm{pr}_1\left(\widetilde y'-G^{-n}(\widetilde y')\right)= \rho_\mu.$$

But since $y'$ is in $q(C)$ and is accessible, Proposition~\ref{propluis} tells us that 
$$\rho^{\mathrm{end}}_-(q(\mathcal{C}^{+}),G)=\rho^{\mathrm{end}}_+(q(\mathcal{C}^{-}),G)=\rho_\mu,$$
and since $\mu$ was arbitrary, there exists a value $\rho_0=\rho_\nu$ for all $g$-invariant ergodic measures $\nu$. 
{From this, using that the set of Borel probability measures supported on $q(C)$ must be a convex and compact set in the weak-$*$ topology, and that the functional assigning for each invariant measure $\overline{\nu}$ the number $\int \Delta d\overline{\nu}$ is linear, one deduces by a standard argument in ergodic theory that for any point $y'\in q(C)$, it holds}
$$\lim_{n\to+\infty}\frac{1}{n}\mathrm{pr}_1\left(G^n(\widetilde y')-\widetilde y'\right)=\lim_{n\to+\infty}\frac{1}{n}\mathrm{pr}_1\left(\widetilde y'-G^{-n}(\widetilde y')\right)= \rho_0.$$  Since  $\widetilde q$ commutes with $T$ and $C$ is compact, $\|\widetilde q(\widetilde y)- \widetilde y\|$ is uniformly bounded in $\pi^{-1}(C)$. One deduces that, for any $\widetilde y$ lift of $y\in C$,
$$
\begin{aligned}
\lim_{n\to\pm\infty}\frac{1}{n} \mathrm{pr}_1\left(F^{n}(\widetilde y)-\widetilde y\right)&=\\
\lim_{n\to\pm\infty}\frac{1}{n} \mathrm{pr}_1\left( F^{n}(\widetilde y)-\widetilde q(F^{n}(\widetilde y))+ G^{n}(\widetilde q(\widetilde y))-\widetilde q(\widetilde y)+ \widetilde q(\widetilde y)- \widetilde y\right)&=\rho_0.
\end{aligned}$$

It remains to be shown that 
$$\rho_0= \rho^{\mathrm{end}}_-(\mathcal{C}^+,F)=\rho^{\mathrm{end}}_+(\mathcal{C}^-,F).$$
Let $h_{-,1}:\mathcal{U}^{-}(\mathcal{C}^{+})\to \T^1\times(0,1)$ be the inclusion map on the prime-end compactification of $\mathcal{U}^{-}(\mathcal{C}^{+})$ and $f^{\mathrm{end}}_{-}$ the homeomorphism of $\T^1\times(0,1]$ induced by $f$. 
Consider a lift $H_{-,1}:\mathcal{U}^-(\mathcal{C}^+)\to \R\times(0,1)$ of $h_{-,1}$ and the induced lift 
$F^{\mathrm{end}}_{-}:\R\times(0,1]\to\R\times(0,1]$ associated to $F$ as introduced in the preliminaries section.
In the same way consider for $g,q(\mathcal{U}^-(\mathcal{C}^+))$ the corresponding maps $g^{\mathrm{end}_-}$, $h_{-,2}, H_{-,2}$ and $G^{\mathrm{end}}_{-}$.

Then the map 
$$Q=H_{-,2}\circ \widetilde q \circ H_{-,1}^{-1}:\R\times (0,1)\to \R\times (0,1)$$
 conjugates the actions of $F^{\mathrm{end}}_-$ and $G^{\mathrm{end}}_-$ in its domain. 
 We would like to extend $Q$ to $\R\times(0,1]$, and to do so, we will use the set of accessible prime ends.

For any lift $\tilde{r}$ in $\tilde{\mathcal{U}}^{-}(\mathcal{C}^{+})$ of a ray which lands on a point $\tilde{x}$ in the complement
of $\pi^{-1}(C)$, it holds that $\tilde{q}(\tilde{r})$ is a lift of a ray in  
$q(\mathcal{U}^{-}(\mathcal{C}^{+}))$ which lands on a point $\tilde{q}(\tilde{x})$, and that $H_{-,1}(\tilde{r})$ is also a ray that lands on a point $(a,1)$ and $H_{-,2}(\tilde{q}(\tilde{r}))$ lands on a point $(b,1)$. The value $b$ is independent of the chosen $\tilde{r}$ as long as $H_{-,1}(\tilde{r})$ lands on $(a,1)$.  Define $\Psi(a)=b$ on all those $a$ such that $(a,1)$ lifts accessible prime ends of $\mathcal{U}^-(\mathcal{C}^+)$
and let $Q^{\mathrm{end}}((a,1))=(\Psi(a),1)$.

As $\Psi$ is nondecreasing, and since for any such $\tilde{r}$, $H_{-,1}(\tilde{r}+(1,0))=H_{-,1}(\tilde{r})+(1,0)$ and the same holds for $\tilde{q}$, we deduce that $\Psi(a+1)=\Psi(a)+1$.  Thus
$\Psi$ induces a function $\psi$ defined in the accessible prime ends $A_{-,1}$ in $\T^1\times\{1\}$ associated to
$\mathcal{U}^{-}(\mathcal{C}^{+})$ having its image contained in the set of accessible prime ends
$A_{-,2}$ in $\T^1\times\{1\}$ associated to $q\left(\mathcal{U}^{-}(\mathcal{C}^{+})\right)$.
Let  $B_{-,2}$ be the subset of $A_{-,2}$ of accessible prime ends that are not associated to a point in $q(C)$. Since $q$ is a bijection from $\A\setminus C$ onto its image, the image of $\Psi$ contains all lifts of prime ends in $B_{-,2}$.

As discussed in Section \ref{s.prelim}, both the sets $A_{-,1},A_{-,2}$ are dense in $\T^1\times\{1\}$. 
The closure of
those elements of $A_{-,2}$ associated to points in $q(C)$ forms a closed set with empty interior; otherwise, one obtains
that $q(C)$ is not totally disconnected, contradicting Proposition \ref{p.intercircacc}. Then, we obtain that
$B_{-,2}$ is also dense in $\T^1$. Thus, as $\Psi$ is nondecreasing, defined on a dense subset and its image is dense, the function $\Psi$ can be extended
to a continuous non-decreasing and onto map
$$\psi:\T^1\to \T^1$$
and its lifts extend $\Psi$ to be a continuous surjection on $\R$.


Therefore, one can then extend $Q^{\mathrm{end}}:\R\times\{1\}\to\R\times\{1\}$ as a continuous monotone surjection also satisfying
$$Q^{\mathrm{end}}((a+1,1))= Q^{\mathrm{end}}((a,1))+(1,0)$$ 
and verifying that 
$$Q^{\mathrm{end}}\circ F^{\mathrm{end}}=G^{\mathrm{end}}\circ Q^{\mathrm{end}}$$ in the domain of 
$Q^{\mathrm{end}}$. Since $Q^{\mathrm{end}}(z)- z$ is bounded, this implies by classical arguments
of circle dynamics that 
$$\rho^{\mathrm{end}}_-(q(\mathcal{C}^{+}),G)=\rho^{\mathrm{end}}_-(\mathcal{C}^{+},F)=\rho_0.$$
One shows likewise that $\rho^{\mathrm{end}}_+(q(\mathcal{C}^{-}),G)=\rho^{\mathrm{end}}_+(\mathcal{C}^{-},F)=\rho_0$, proving the theorem.
\end{proof}

\subsection{Restrictions on the proximity of invariant continua to pairs of fixed points}\label{sub.3.3}

In this subsection we consider $f\in\mathrm{Homeo}_0(\A)$ having two fixed points with different rotation numbers, and an invariant continuum $\mathcal{K}$, and we wish to show several situations where $\mathcal{K}$ cannot be proximal to both points. We start with a prototypical example:

\begin{lemma}
Let $f\in\mathrm{Homeo}_0(\A)$ be a map with two fixed points $x_0$ and $x_1$ with different rotation numbers, and let $\gamma$ be a simple closed curve invariant by $f$. Then $x_0$ and $x_1$ cannot be both $3$-proximal to $\gamma$.
\end{lemma}
\begin{proof}
This follows directly from Lemma~\ref{lemmabpatrice}, since, if $F$ is any lift of $f$, then $\mid\rho(x_0,F)-\rho(x_1,F)\mid\ge 1$, which means that the intervals 
$$\left[\rho(x_0,F)-\frac{1}{3}, \rho(x_0,F)+\frac{1}{3}\right]\mbox{ and }\left[\rho(x_1,F)-\frac{1}{3}, \rho(x_1,F)
+\frac{1}{3}\right]$$ 
are disjoint.
\end{proof}

We can improve this result if we assume that $x_0$ and $x_1$ are on the same side as $\gamma$.

\begin{lemma}\label{casocurva}
Let $f\in\mathrm{Homeo}_0(\A)$ and $x_0$, $x_1$ two fixed points of $f$ with different rotation numbers. Consider 
a simple closed curve $\gamma$ invariant by $f$. Assume $x_0$ and $x_1$ are in the same connected component of the complement of $\gamma$, and let $U_0,U_1$ form a $2$-dpn for $x_0,x_1$. Then $\gamma$ cannot intersect both $U_0$ and $U_1$.
\end{lemma}
\begin{proof}
Assume, for a contradiction, that $\gamma$ intersects both $U_0$ and $U_1$. We can assume, by a change of coordinates, that $\gamma=\T^1\times\{0\}$. We fix $F$ a lift of $f$ and assume, with no loss in generality, that $\rho(x_0,F)=0$ and that $\rho(x_1,F)=L\ge 1$.

Let $U\subset\A$ be the connected component of $\A\setminus \gamma$ containing
both $x_0$ and $x_1$. As we assumed that $\gamma$ meets both
$U_0,U_1$ it is possible to consider two arcs $j_0,j_1$
contained in $\textrm{cl}(U)=U \cup \gamma$ and an arc $i$ so that:
\begin{enumerate}
\item $j_0\subset U_0$, $j_1\subset U_1$.
\item $j_0$ starts in $x_0$, finishes in $p_0\in\gamma$ and
$j_0\setminus \{p_0\}\subset U$.
\item $j_1$ starts in $x_1$, finishes in $p_1\in\gamma$ and
$j_1\setminus \{p_1\}\subset U$.
\item $i$ is an inessential arc in $\gamma$ joining $p_0,p_1,$ such that, if $\widetilde i$ is a lift of $i$ with an endpoint in $\widetilde p_0$, then $\widetilde i$ is a subarc of the line segment joining $\widetilde p_0$ and $\widetilde p_0+(1,0)$
\end{enumerate}

Set $G=F^2$. Consider the arc $\theta$ given by the concatenation of $j_0\cdot i\cdot (j_1)^{-1}$ and take a lift
$\widetilde{\theta}$ of $\theta$ starting at $\widetilde{x}_0$, a lift of $x_0$, and finishing
at $\widetilde{x}_1$, a lift of $x_1$. As $G(\widetilde{x}_0)=\widetilde{x}_0$ and
$G(\widetilde{x}_1)=\widetilde{x}_1+(2L,0)$ for some integer $L>0$, it holds that $G(\widetilde{\theta})$ meets
$\widetilde{\theta},\widetilde{\theta}+(1,0)$ and $\widetilde{\theta}+(2,0)$. Consider
$\widetilde{j}_0,\widetilde{j}_1,\widetilde{i}$ respective lifts of $j_0, j_1, i$ contained in $\widetilde{\theta}$.
If $\widetilde{p}_0=(a,0)$, and $\widetilde{p}_1=(b,0)$ are the endpoints of $\widetilde{i}$, then $a<b<a+1$. Given two points $(c_1,0), (c_2,0)$, denote by $[c_1, c_2]$ the line segment joining these two points. Let $\psi:\R\to\R$ be such that $G((a,0))=(\psi(a),0)$. Then the following hold

\begin{enumerate}
\item $\psi(a)<a+1$. Indeed, assume for a contradiction this is false. Then, as $j_0\subset U_0$ and $U_0 \cup f^2(U_0)$ is inessential, this implies $G(\tilde{j}_0) \cap
\widetilde{j}_0+(1,0)=\emptyset$, so the point $\widetilde{x}_0+(1,0)$ must be contained in some bounded connected component of $\R^2\setminus \left([a,\psi(a)]\cup\widetilde{j}_0\cup G(\widetilde{j}_0)\right)$. But  $G(\widetilde{j}_0+(1,0))$ connects $\widetilde{x}_0+(1,0)$ to $G((a,0))+(1,0)$ which is contained in the unbounded component of the complement of
    $[a,\psi(a)]\cup\widetilde{j}_0\cup G(\widetilde{j}_0)$, implying that $G(\widetilde{j}_0+(1,0))$
    meets $G(\widetilde{j}_0)\cup\widetilde{j}_0$, which contradicts the definition of a disjoint pair of neighborhoods.

\item $\psi(b)>b-1$: this follows from a similar argument as the one presented for the previous item.
\end{enumerate}

Moreover, it holds:

\begin{enumerate}[resume]

\item $\psi(a)<b$. Indeed, assume for a contradiction this is false. Then $a<b<\psi(a)$, and as $\widetilde{j}_1$
cannot meet $\widetilde{j}_0$ nor $G(\widetilde{j}_0)$, the point $\widetilde{x}_1$ must be contained in some
bounded connected component of $\R^2\setminus \left(\widetilde{j}_0\cup G(\widetilde{j}_0)\cup [a,\psi(a)]\right)$. Hence
$G(\widetilde{j}_1)-(2,0)\cap \R\times\{0\}$ is given by a point contained in $[a,\psi(a)]$ as otherwise
$G(\widetilde{j}_1)-(2,0)$ must intersect $\widetilde{j}_0 \cup G(\widetilde{j}_0)$. Thus
$G(\widetilde{j}_1) \cap \R\times\{0\}$ is given by a point contained in $[a+2,\psi(a)+2]$. Then
$G(\widetilde{i})$ contains both points $G((a,0))$ (due to the first consideration above)
and $(a,0)+(2,0)$ which is impossible as $\widetilde{i}$ lifts an inessential arc in $\gamma$.

\item $\psi(b)>a+2$: the same argument introduced for the case above works
in this one.

\end{enumerate}

But these last two considerations imply that $G(\widetilde{i})$ meets $\widetilde{i}$ and $\widetilde{i}+(2,0)$ which is absurd\textcolor{black}{: $G(\tilde{i})$ would then contain the segment $[a+1,a+2]$, so $f^2(i)$ would be an essential subset of $\A$, contradicting the fact that $f^2$ is a homeomorphism and $i$ is inessential}.
\end{proof}

The last lemma implies the following. 
\begin{prop}\label{maintechnical}
Let $f\in\mathrm{Homeo}_0(\A)$ and $x_0$, $x_1$ be two fixed points of $f$ with different rotation numbers. Consider an invariant annular continuum $\mathcal{K}$ for $f$. Assume that $x_0$ is either in $\mathcal{U}^{+}(\mathcal{K})$ or is accessible from $\mathcal{U}^{+}(\mathcal{K})$ and that the same holds for $x_1$. If $U_0$ and $U_1$ form a $2$-dpn for $x_0,x_1$, then $\mathcal{K}$ cannot intersect both $U_0$ and $U_1$.   
\end{prop}
\begin{proof}
Let $F$ be the lift of $f$ fixing $x_0$. We assume, with no loss in generality, that $\rho(x_1, F)=L\ge 1$. If $x_0$ is accessible from $\mathcal{U}^{+}(\mathcal{K})$, then $\rho^{\mathrm{end}}_+(\mathcal{K},F)=0$ by Proposition~\ref{propluis}, and also $\mathcal{K}$ intersects $U_0$. But the hypotheses on $x_1$ imply that 
$\rho^{\mathrm{end}}_+(\mathcal{K},F)=L$ if $x_1$ is accessible, and, in case that $\mathcal{K}$ intersects $U_1$, then by  Lemma~\ref{lemmabpatrice}, $\rho^{\mathrm{end}}_+(\mathcal{K},F)\ge L-1/2$, a contradiction in both cases. Therefore the result holds if $x_0$ is accessible from $\mathcal{U}^{+}(\mathcal{K})$ and the same must be true if $x_1$ is accessible from $\mathcal{U}^{+}(\mathcal{K})$. 

The case when both $x_0$ and $x_1$ lie in $\mathcal{U}^{+}(\mathcal{K})$ then follows from Lemma~\ref{casocurva} by studying the same situation in the prime-end compactification, in exactly the same way as Theorem~\ref{t.proximal} followed from Lemma~\ref{lemmabpatrice}.
\end{proof}

We finish the section with an application for invariant \textcolor{black}{inessential continua}.

\begin{prop}\label{t.maintechnical0}
Let $F$ be a lift of $f\in\homeo$; $x_0, x_1$ be fixed points of $f$ such that $\rho(x_1,F)-\rho(x_0,F)\geq 1$,  and $U_0, U_1$ form a $3$-dpn for $x_0,x_1$. If $\mathcal{X}$ is an $f$-invariant inessential
continuum, then it is disjoint from either $U_0$ or $U_1$.
\end{prop}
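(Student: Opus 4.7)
The plan is to argue by contradiction: assume $\mathcal{X}$ meets both $U_0$ and $U_1$, and derive a topological obstruction in the universal cover. Choose the lift $F$ so that $F(\tilde x_0)=\tilde x_0$ for some lift $\tilde x_0$ of $x_0$; then there is a lift $\tilde x_1$ of $x_1$ with $F(\tilde x_1)=\tilde x_1+(\rho,0)$, $\rho\geq 1$. Set $V_i=\bigcup_{j=0}^{3} f^j(U_i)$ and let $\tilde U_i,\tilde V_i$ be the components of $\pi^{-1}(U_i),\pi^{-1}(V_i)$ containing $\tilde x_i$. The $3$-dpn hypothesis guarantees that each $V_i$ is inessential (so the integer translates of $\tilde V_i$ are pairwise disjoint) and that $V_0\cap V_1=\emptyset$.

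A key preliminary is that the forward orbit of each neighborhood stays trapped in its distinguished lift: for $j=0,1,2,3$, the set $F^j(\tilde U_0)$ is connected, projects into $f^j(U_0)\subset V_0$, and contains $F^j(\tilde x_0)=\tilde x_0\in\tilde V_0$, so by disjointness of the lifts of $V_0$ we must have $F^j(\tilde U_0)\subset\tilde V_0$. An analogous argument tracking $F^j(\tilde x_1)=\tilde x_1+(j\rho,0)$ yields $F^j(\tilde U_1)\subset\tilde V_1+(j\rho,0)$. Since $\mathcal{X}$ is $f$-invariant and inessential, each connected component $\widetilde{\mathcal{X}}$ of $\pi^{-1}(\mathcal{X})$ is compact, $\pi|_{\widetilde{\mathcal{X}}}$ is a homeomorphism onto $\mathcal{X}$, and there is a single integer $k$ with $F(\widetilde{\mathcal{X}})=\widetilde{\mathcal{X}}+(k,0)$.

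Now I choose $\widetilde{\mathcal{X}}$ meeting $\tilde U_0$. Surjectivity of $\pi|_{\widetilde{\mathcal{X}}}$ together with $\mathcal{X}\cap U_1\neq\emptyset$ produces an $l\in\Z$ with $\widetilde{\mathcal{X}}\cap(\tilde U_1+(l,0))\neq\emptyset$. Iterating by $F^j$ for $j=0,1,2,3$ then forces $\widetilde{\mathcal{X}}$ to meet the four translates $\tilde V_0-(jk,0)$ and the four translates $\tilde V_1+(j(\rho-k)+l,0)$. Since $\rho\geq 1$, not both $k$ and $\rho-k$ can vanish, so the connected set $\widetilde{\mathcal{X}}$ must intersect at least four pairwise disjoint integer translates of either $\tilde V_0$ or $\tilde V_1$. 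Equivalently, the four pairwise disjoint compact connected sets $F^j(\widetilde{\mathcal{X}})$, $j=0,1,2,3$, all link the lift $\tilde V_0$ to the distinct progression of lifts $\tilde V_1+(j\rho+l,0)$.

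The main obstacle, which I would close using the planar separation lemmas developed in Section~\ref{s.separcs}, is to convert this intersection configuration into a topological contradiction: four pairwise disjoint connected sets in $\R^2$, each joining a common inessential region $\tilde V_0$ to four successive lifts of the disjoint inessential region $\tilde V_1$, must eventually force two of them to cross, contradicting the disjointness of distinct lifts of $\mathcal{X}$. The factor $3$ in the $3$-dpn hypothesis is calibrated precisely so that four iterates $j=0,\ldots,3$ are available to trigger such a separation argument.
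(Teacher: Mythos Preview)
Your setup in the cover is fine up through the point where you observe that $F(\widetilde{\mathcal{X}})=\widetilde{\mathcal{X}}+(k,0)$ and that $\widetilde{\mathcal{X}}$ therefore meets several translates of $\tilde V_0$ or $\tilde V_1$. The gap is in the final step. The assertion that ``four pairwise disjoint connected sets in $\R^2$, each joining a common inessential region $\tilde V_0$ to four successive lifts of the disjoint inessential region $\tilde V_1$, must eventually force two of them to cross'' is simply false: four disjoint arcs can radiate from a single disk to four other disks without crossing. Nothing in Section~\ref{s.separcs} will rescue this configuration as you have described it. Note also that your ``equivalently'' is not an equivalence: when $k=0$ the sets $F^j(\widetilde{\mathcal{X}})$ all coincide, so they are not four pairwise disjoint continua. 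In short, you have extracted a true intersection pattern but one that carries no contradiction by itself.

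What the paper does instead is to anchor the fixed points to $\mathcal X$ by arcs $j_i\subset U_i$ from $x_i$ to a point of $\mathcal X$, and work with the single inessential continuum $K=j_0\cup\mathcal X\cup j_1$. Taking $G=F^3$, the continuum $G(\widetilde K)$ contains $\tilde x_0\in\widetilde K$ and $\tilde x_1+(3L,0)\in\widetilde K+(3L,0)$, so by Lemma~\ref{l.intersectioncontinua} it must meet every intermediate translate $\widetilde K+(i,0)$, $0\le i\le 3L$. Now $G(\widetilde{\mathcal X})$ is a \emph{single} translate of $\widetilde{\mathcal X}$, hence disjoint from at least one of $\widetilde{\mathcal X}+(1,0)$, $\widetilde{\mathcal X}+(2,0)$; and the $3$-dpn hypothesis makes $G(\tilde j_0)$ disjoint from all lifts of $j_1$ and vice versa. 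So the intersection of $G(\widetilde K)$ with that translate of $\widetilde K$ must come from $G(\tilde j_0)\cap(\tilde j_0+(i,0))$ or $G(\tilde j_1)\cap(\tilde j_1+(i,0))$ for some $i\ne 0$; together with the fixed endpoints this makes $U_0\cup f^3(U_0)$ or $U_1\cup f^3(U_1)$ essential, contradicting the $3$-dpn assumption. The key idea you are missing is to attach the fixed points to $\mathcal X$ so that the image continuum is forced to sweep across \emph{consecutive} translates, after which the decomposition into arc pieces and $\mathcal X$-piece localizes the contradiction.
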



\begin{proof}
Assume for a contradiction that both neighborhoods $U_0,U_1$ meet $\mathcal{X}$.
Consider a pair of arcs $j_0, j_1$ so that:

\begin{enumerate}
\item $j_0\subset U_0$, $j_1\subset U_1$.
\item $j_0$ starts in $x_0$, finishes in $p_0\in\mathcal{X}$ and
$j_0\setminus p_0\subset \A\setminus\mathcal{X}$.
\item $j_1$ starts in $x_1$, finishes in $p_1\in\mathcal{X}$ and
$j_1\setminus p_1\subset \A\setminus\mathcal{X}$.
\end{enumerate}

As $U_0$ and $U_1$ are disjoint, the set
$K=j_0\cup \mathcal{X}\cup j_1$ is inessential and connected. Consider a lift
$\widetilde{K}$ of $K$ and $\widetilde{j}_0,\widetilde{\mathcal{X}},\widetilde{j}_1$ lifts
of $j_0,\mathcal{X},j_1$ contained in $\widetilde{K}$. Further consider $\widetilde{x}_0$
as the lift of $x_0$ in $\widetilde{j}_0$ and $\widetilde{x}_1$ the lift of $x_1$
in $\widetilde{j}_1$. We assume, with no loss of generality, that $F$ is the lift of $f$ that fixes $\widetilde{x}_0$. Let $F^3=G$.

As $G(\widetilde{x}_0)=\widetilde{x}_0$ and
$G(\widetilde{x}_1)=\widetilde{x}_1+(3L,0)$ where $L=\rho(x_1,F)-\rho(x_0,F)$, Lemma \ref{l.intersectioncontinua}
implies that $G(\widetilde{K})$ meets $\widetilde{K},\widetilde{K}+(1,0),\widetilde{K}+(2,0)$ and $\widetilde{K}+(3,0)$. Note that $G(\widetilde{\mathcal{X}})$ must coincide with one of the lifts of $\mathcal{X}$, since $\mathcal{X}$ is an inessential invariant continuum. In particular, either $G(\widetilde{\mathcal{X}})$ is disjoint from $\widetilde{\mathcal{X}}+(1,0)$, or $G(\widetilde{\mathcal{X}})$ is disjoint from $\widetilde{\mathcal{X}}+(2,0)$, and we assume the former holds, the other case being similar. Note also that, by the assumption that $U_0, U_1$ form a $3$-dpn, we have that $G(\widetilde{j}_0)$ is disjoint from all lifts of $\widetilde{j}_1$ and $G(\widetilde{j}_1)$ is disjoint from all lifts of $\widetilde{j}_0$. But $G(\widetilde{K})$ meets $\widetilde{K}+(1,0)$, so one must have that either $G(\widetilde{j}_0)$ intersects $\widetilde{j}_0+(1,0)$ or  $G(\widetilde{j}_1)$ intersects $\widetilde{j}_1+(1,0)$. Since  $G(\widetilde{j}_0)$ intersects $\widetilde{j}_0$ at $\widetilde{x}_0$ and $G(\widetilde{j}_1)$ intersects $\widetilde{j}_1+(3L,0)$ at $\widetilde{x}_1+(3L,0)$, this implies that either $U_0\cup f^3(U_0)$ or $U_1\cup f^3(U_1)$ must be  an essential set, which is impossible since $U_0, U_1$ form a $3$-dpn.
\end{proof}

\section{Proofs of Theorems A and B}\label{s.conservative}
The goal of this section is to obtain Theorems A and B. The proofs combine techniques developed in Section~\ref{s.techresult} and a
topological characterization of \emph{zero-entropy}
homeomorphisms of the $2$-sphere, due to Franks and Handel in the smooth case and
to Le Calvez and the second author in the general case (\cite{frankshandel, forcing}).
\textcolor{black}{First, we need the following consequence of Proposition~54 of \cite{forcing}:}
\textcolor{black}{
\begin{prop}\label{p.54forcing}
Let $G$ lift $g\in \mathrm{\normalfont{Homeo}}_{0,\mathrm{\normalfont{nw}}}(\A)$ with no rotational chaos. If $x_0$ and $x_1$ are fixed points for $g$ such that $\rho(x_0,G)<0<\rho(x_1,G),$ then there exist two disjoint open invariant essential annuli $A_0, A_1$, with $x_0\in A_0$, $x_1\in A_1$ such that, for any $y$ in $\partial A_0\cup\partial A_1$ both the $\omega$-limit and the $\alpha$-limit of $y$ are subsets of $\pi(\mathrm{Fix}(G))$.
\end{prop}}

\textcolor{black}{We also need the following consequence of Theorem A of \cite{lecalveztal}.
\begin{prop}\label{p.teoADuke}
Let $F$ lift $f\in \mathrm{\normalfont{Homeo}}_{0,\mathrm{\normalfont{nw}}}(\A)$ with no rotational chaos. Then, for any $x$ whose forward orbit is bounded and any lift $\widetilde x$ of $x$, the limit  
$$R_F(x):=\lim_{n\to+\infty}\frac{\mathrm{pr}_1( F^n(\widetilde x)-\widetilde x)}{n}$$
exists and is independent of $\widetilde x$. Furthermore, $R_F$ is continuous on the set of points whose forward orbit is bounded
\end{prop}}

As a consequence, we have the following proposition.
\textcolor{black}{
\begin{prop}\label{t.forcing}
Let $f\in \mathrm{\normalfont{Homeo}}_{0,\mathrm{\normalfont{nw}}}(\A)$ with
no rotational chaos. Fix a lift $F$ of $f$ and assume $f$ has a pair
of fixed points $x_0,x_1$ such that $\rho(x_0,F)=0$, and that $\rho(x_1,F)=\rho>0$. Then, there exists an $f$-invariant circloid $\mathcal{C}$ that separates $x_0$ and $x_1$, and such that $\rho(\mathcal{C},F)=\{\rho/2\}$. In particular, for any $y\in \mathcal{C}$ and any $\tilde y$ lift of $y$, 
$$\lim_{n\to+\infty}\frac{1}{n}\mathrm{pr}_1(F^n(\tilde y)-\tilde y)=\frac{\rho}{2}.$$
\end{prop}}
\begin{proof}
First we apply Proposition~\ref{p.54forcing} using $g=f^2$ and $G=F^2-(\rho,0)$, and let $A_0$ and $A_1$ be the $g$-invariant annuli of its conclusion. Since the complement of $\overline{A_0}$ contains at most two essential components, and since $A_1$ is an essential connected set disjoint from $\overline{A_0}$, $x_1$ is in one of these essential connected components.  We will assume, with no loss of generality, that $x_1$ is contained in the connected component of the complement of $\overline{A_0}$ that contains $\T^1\times [M,+\infty)$ for sufficiently large $M$, the other case is similar. Let $\partial^+ A_0$ be the boundary of the connected component $U_1$ of the complement of $\overline{A_0}$ that contains $x_1$, which is an essential continuum that separates $x_0$ and $x_1$, contained in $\partial A_0$ and is disjoint from $A_0$. Since $A_0$ is essential and $\partial^{+} A_0$ is connected, $A_0$ is disjoint from \textcolor{black}{$\mathcal{C}_0=\mathrm{Fill}(\partial^{+} A_0)$. One sees that $\mathcal{C}_0$ is an essential annular continuum, and every point in its boundary is in the intersection of $\partial U_1$ and $\partial A_0$. This implies $\mathcal{C}_0$ is a $g$-invariant circloid separating $x_0$ and $x_1$ as $U_1=\mathcal{U}^{+}(\mathcal{C}_0)$ and $A_0\subset\mathcal{U}^{-}(\mathcal{C}_0)$.}

\textcolor{black}{Note now that $D=\mathcal{C}_0\cup f(\mathcal{C}_0)$ is a bounded $f$-invariant set. It is also connected, as the image of any essential continuum must intersect itself if $f$ has no wandering points. Thus the complement of $D$ has exactly two essential connected components. Also, if $x$ is any point in either $D$ or in an inessential connected component of its complement, i.e. in $\mathrm{Fill}(D)$, then its orbit is bounded and thus, by Proposition~\ref{p.teoADuke}, $R_F(x)$ is well defined and continuous in the essential annular continuum $\mathrm{Fill}(D)$. Also, the $\omega$-limit by $g$ of every point in $\partial D$, by Proposition~\ref{p.54forcing}, is contained in $\pi(\mathrm{Fix}(G))$, which implies that $R_F(x)\equiv \rho/2$ if $x\in\partial D$.}

\textcolor{black}{We claim that if $O$ is the connected component of the complement of $D$ containing $x_0$, then $O$ must be essential. Indeed, if we assume for a contradiction that $O$ is inessential, and if $\widetilde{O}$ is a lift of $O$, since $\rho(x_0,F)=0$ we have that $F(\widetilde{O})=\widetilde{O}$. This implies that, if $y$ in $O$ is recurrent, then $R_F(y)=0$. If $\tilde y \in \widetilde O$ lifts a recurrent point $y$, and $(n_k)$ is an increasing sequence of integers such that $(f^{n_k}(y))$ converges to $y$, then $(F^{n_k}(\tilde y)$ also belongs to $\widetilde O$ for sufficiently large $k$ and so converges to $\tilde y$. This implies that $R_F(y)$ must be zero. Since $f$ has no wandering points, the set of recurrent points is dense in $O$, thus $R_F$ is constant and vanishes in $\overline{O}$, which is impossible since $\partial O\subset\partial D$.}

Thus $x_0$ lies in the complement of $\mathcal{K}=\mathrm{Fill}(D)$, and since $x_0\notin \mathcal{U}^{+}(\mathcal{C}_0)$ and $\mathcal{U}^{+}(\mathcal{K})\subset\mathcal{U}^{+}(\mathcal{C}_0)$, then $x_0\in\mathcal{U}^{-}(\mathcal{K})$.  One shows likewise that $x_1\in\mathcal{U}^{+}(\mathcal{K})$. It suffices then to take any $f$-invariant circloid $\mathcal{C}$ in $\mathcal{K}$.   
\end{proof}

\textcolor{black}{We are ready to provide the proofs of Theorems A and B. We will, until the end of the section, assume that $f,F,x_0,x_1$ and $\rho$ are as in the hypotheses of Proposition~\ref{t.forcing}. In both the proofs of Theorem A and B, we will assume, for a contradiction, that $f$ has no rotational chaos. As such, we can take $\mathcal{C}$ as given in the conclusion of Proposition~\ref{t.forcing}. We will further assume, with no loss in generality, that $x_0\in\mathcal{U}^{-}(\mathcal{C})$, while $x_1\in\mathcal{U}^{+}(\mathcal{C})$.}

\medskip

\begin{proof}[Proof of Theorem A] Note that, as $\rho(\mathcal{C},F)=\{\rho/2\}$, Proposition~\ref{propluis} implies that
$$\rho^{\mathrm{end}}_{+}(\mathcal{C},F)=\rho^{\mathrm{end}}_{-}(\mathcal{C},F)=\rho/2.$$ 

\textcolor{black}{Assume first that $\rho\ge 3$. Then Theorem~\ref{t.proximal} tells us that, if $U_0, U_1$ form a $1$-dpn for $x_0, x_1$, then neither $U_0$ nor $U_1$ can intersect $\mathcal{C}$ which implies that $U_0\subset \mathcal{U}^{-}(\mathcal{C})$ and $U_1\subset \mathcal{U}^{+}(\mathcal{C})$. So, as $\mathcal{U}^{-}(\mathcal{C})$ and $\mathcal{U}^{+}(\mathcal{C})$ are invariant and disjoint, the forward orbit of $U_0$ never meets $U_1$, contradicting the assumption that $x_0$ and $x_1$ are $1$-Birkhoff related.} In the case where $\rho=2$, one observes that in this case if $U_0, U_1$ form a $2$-dpn for $x_0, x_1$, then neither $U_0$ nor $U_1$ can intersect $\mathcal{C}$ and then one argues as before. In the case where $\rho=1$, we observe that if $U_0, U_1$ form a $3$-dpn for $x_0, x_1$, then again neither $U_0$ nor $U_1$ can intersect $\mathcal{C}$, and then apply the same reasoning.
\end{proof}

\smallskip

Before proving Theorem B, let us recall that, for $L>0$, we defined 
$$A_L=\T^1\times[-L,L]\mbox{ and }N_L(f)=\max_{x\in A_L}\vert \mathrm{pr}_2\left(x-f(x)\right)\vert.$$ 

\smallskip

\begin{proof}[Proof of Theorem B]{\color{black} 
  Let us assume the existence of $L_1$ and $L_2$ satisfying items (a) and (b). Let $\sigma:[0,1]\to A_{L_1}, \,\sigma(t)=x_0+t(x_1-x_0)$ be the line segment joining $x_0$ and $x_1$. There exist some minimal $t_0>0$ and some maximal $t_1<1$ such that $\sigma(t_0)$ and $\sigma(t_1)$ belong to $\mathcal{C}$. In particular, $\sigma([0,t_0))\subset \mathcal{U}^{-}(\mathcal{C})$ and $\sigma((t_1,1])\subset \mathcal{U}^{+}(\mathcal{C})$.

Note that, by Theorem~\ref{t.proximal}, since $\rho^{\mathrm{end}}_{-}(\mathcal{C},F)=\rho/2$ and since $\rho/2-\rho(x_0,F)=\rho/2>1/r$, $x_0$ cannot be $r$-proximal to $\mathcal{C}$. In particular, if $U$ is any connected open  neighborhood of $\beta_0=\sigma([0,t_0])$, then $\cup_{i=0}^{r}f^{ i}(U)$ is essential, which implies that $\gamma_0=\cup_{i=0}^{r}f^{ i}(\beta_0)$ is essential. The hypothesis on $L_1$ and $L_2$ imply that $\gamma_0\subset A_{L_2}$. Let $\mathcal{C}_0\subset \mathrm{Fill}(\gamma_0)$ be a circloid. Then, as $\beta_0$ is contained in $\overline{\mathcal{U}^{-}(\mathcal{C})}$, we have that $\mathcal{C}_0\preceq\mathcal{C}$, and $\mathcal{C}$ is contained in $\overline{\mathcal{U}^{+}(\mathcal{C}_0)}$. Since $\T^1\times (-\infty,-L_2)\subset \mathcal{U}^{-}(\mathcal{C}_0)$, we deduce that $\mathcal{C}\subset \T^1\times [-L_2,+\infty)$.

An analogous argument, using that $x_1$ cannot be $r$-proximal to $\mathcal{C}$, shows that  $\mathcal{C}\subset \T^1\times (-\infty, L_2]$. We deduce then that $\mathcal{C}\subset A_{L_2}$, that $\T^1\times(-\infty, -L_2)$ is contained in $\mathcal{U}^{-}(\mathcal{C})$ and that $\T^1\times(L_2,+\infty)$ is contained in $\mathcal{U}^{+}(\mathcal{C})$. But $\mathcal{U}^{-}(\mathcal{C})$ and $\mathcal{U}^{+}(\mathcal{C})$ are invariant and disjoint, a contradiction with the fact that some orbit of $f$ visits both connected components of the complement of $A_{L_2}$.} 
\end{proof}

Before we close the section, let us introduce a proposition which will be used in Section \ref{s.numerics}
(see also \cite{theoappandnumer}) for the computer-assisted proofs
of the existence of chaos of maps inside relevant families. To do so, we consider the two torus 
$\mathbb{T}^2=\R^2/\Z^2$, and for a given
map $f:\T^2\to\T^2$ homotopic to the identity, the vertical displacement function $\phi:\T^2\to\R$ given by 
$\phi(x)=\mathrm{pr}_2(\tilde{f}(\tilde{x})-\tilde{x})$ where $\tilde{x}$ represents $x$
and $\tilde{f}$ lifts $f$.

\begin{prop}\label{p.hamiltonianisnonwandering}
Let $f$ be a homeomorphism of $\T^2$ that preserves a probability measure of full support $\lambda$, let $\hat \pi:\A\to\T^2$ be the canonical projection, and assume that $f$ has an
orientation preserving lift $\hat f$ to $\A$. Further, assume that $\hat f$ has two fixed points with rotational difference $\rho\in\N$, that 
$\textrm{h}_{\textrm{\normalfont{top}}}(\hat f)=0$ and that $\int_{\T^2}\phi\,d\lambda=0$.
Then $\hat f$ is non-wandering. 
\end{prop}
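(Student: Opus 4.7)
The plan is to lift $\lambda$ to an $\hat f$-invariant $\sigma$-finite measure $\tilde\lambda$ on $\A$ of full support, view $\hat f$ as a $\Z$-extension of $f$ with a zero-mean integer cocycle, and apply Atkinson's recurrence theorem on each ergodic component. Since recurrent points are non-wandering, the non-wandering set is closed, and $\tilde\lambda$ will have full support, this will force every point of $\A$ to be non-wandering. Concretely, I would define $\tilde\lambda$ by $\int g\,d\tilde\lambda:=\int_{\T^2}\sum_{\tilde y\in\hat\pi^{-1}(y)}g(\tilde y)\,d\lambda(y)$ for compactly supported $g$; the bijection induced by $\hat f$ between $\hat\pi^{-1}(y)$ and $\hat\pi^{-1}(f(y))$ together with $f$-invariance of $\lambda$ give $\hat f$-invariance. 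Fixing a fundamental strip $F=\R/\Z\times[0,1)$ identified with $\T^2$ via $\hat\pi$, the map $\hat f$ becomes a skew product $\hat f(x)=R(x)+(0,\psi(x))$ where $R:F\to F$ is conjugate to $f$ and $\psi:F\to\Z$ is integer-valued; iterating gives $\hat f^n(\tilde x)=R^n(x)+(0,S_n\psi(x))$ and $|S_n\psi-S_n\phi|\le 1$, from which $\int\psi\,d\lambda=\int\phi\,d\lambda=0$.

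The decisive step is promoting this zero-mean condition to each ergodic component in the decomposition $\lambda=\int\mu_\omega\,d\eta(\omega)$. Fixing a lift $\tilde f:\R^2\to\R^2$ of $\hat f$, the two given fixed points realize the rotation vectors $(k_0,0),(k_1,0)\in\R^2$ with $k_0-k_1=\rho\ne 0$, so by Misiurewicz--Ziemian convexity the rotation set $\rho(\tilde f)$ contains the nondegenerate horizontal segment joining them. If $\rho(\tilde f)$ had nonempty interior in $\R^2$, Llibre--MacKay's theorem (and its extension to the Dehn-twist isotopy case, which is the only alternative compatible with the existence of the lift $\hat f$ to $\A$) would force $h_{\textrm{top}}(f)>0$; combined with the inequality $h_{\textrm{top}}(\hat f)\ge h_{\textrm{top}}(f)$---obtained by lifting $(n,\varepsilon)$-separated sets from $\T^2$ into a compact fundamental strip---this contradicts the hypothesis. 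Hence $\rho(\tilde f)$ is contained in the horizontal axis, so every ergodic $f$-invariant probability has vanishing vertical rotation, and $\int\psi\,d\mu_\omega=\int\phi\,d\mu_\omega=0$ for $\eta$-a.e.\ $\omega$.

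Finally, on each such ergodic component the integer-valued zero-mean cocycle $\psi$ is recurrent by Atkinson's theorem (in the joint-recurrence version due to Schmidt): for $\mu_\omega$-a.e.\ $x\in F$ and every open neighborhood $U$ of $x$, there exist infinitely many $n$ with $R^n(x)\in U$ and $S_n\psi(x)=0$, which is exactly $\hat f^n(x)\in U$. Integrating over $\eta$, $\tilde\lambda$-a.e.\ point of $\A$ is recurrent for $\hat f$; since the non-wandering set is closed and $\tilde\lambda$ has full support, every point of $\A$ is non-wandering. The main obstacle is the entropy-to-rotation-set step: the Llibre--MacKay implication is classical when $f$ is isotopic to the identity, and ensuring its analog in the Dehn-twist isotopy case is exactly the deviation/entropy question announced for the companion paper~\cite{theoappandnumer}.
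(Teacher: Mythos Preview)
Your argument is correct and follows a genuinely different route from the paper. Both proofs have the same two-step structure: (i) show that the vertical rotation vanishes for every ergodic component of $\lambda$ (equivalently $\overline\phi=0$ $\lambda$-a.e.), and (ii) deduce non-wandering from this. The implementations differ in both steps.

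For step (i), the paper argues directly on $\A$: if $\overline\phi$ is not identically zero then, since $\int\overline\phi\,d\lambda=0$, one finds points with $\overline\phi<0$ and $\overline\phi>0$; lifting appropriate preimages shows that for every $M$ there are orbits of $\hat f$ travelling from $\textrm{pr}_2<-M$ to $\textrm{pr}_2>M$ and vice versa, so $\A$ is a Birkhoff region of instability and Proposition~D of \cite{lecalveztal} forces $h_{\textrm{top}}(\hat f)>0$. This avoids any case analysis on the isotopy class of $f$. Your route through the rotation set of $\tilde f$ on $\T^2$ also works, but the obstacle you flag is not one: in the Dehn-twist class the relevant implication (nondegenerate vertical rotation interval $\Rightarrow$ positive entropy) is a known result of Doeff and Addas-Zanata (indeed the paper itself invokes it in Section~\ref{s.numerics}), not something deferred to \cite{theoappandnumer}. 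So your step (i) is complete once you cite those, and the paper's approach simply buys a unified treatment.

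For step (ii), the paper gives a direct combinatorial argument: assuming a ball $\hat U\subset\A$ is wandering, every translate $\hat U+(0,i)$ is also wandering, so a point of $U=\hat\pi(\hat U)$ with $\overline\phi=0$ whose $f$-orbit visits $U$ with positive frequency must hit a new vertical translate at each return, forcing $\limsup\frac{1}{n}|S_n\phi|>0$, a contradiction. Your use of Atkinson--Schmidt cocycle recurrence is the natural abstract formulation of exactly this phenomenon, and is arguably cleaner; the paper's hands-on version has the virtue of being self-contained.
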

\begin{proof}
By the Birkhoff Ergodic Theorem, the function
 $$\begin{aligned}
\overline\phi(x):=\lim_{n\to+\infty}\frac{1}{n}\left(\sum_{i=0}^{n-1}\phi(f^i(x))\right)=\\
=\lim_{n\to+\infty}\frac{1}{n} \mathrm{pr}_2(\hat f^{n}(\hat x)-\hat x)\mbox{  with } \hat x\in\hat\pi^{-1}(x)
\end{aligned}$$
 is defined for almost all $x$ and satisfies $\int\overline \phi\, d\lambda=0$. Then, either $\overline \phi(x)$ vanishes almost everywhere, or there exist $x_1, x_2 \in \T^2$ such that $\overline\phi(x_1)<0<\overline\phi(x_2)$. But, in the second case, one can find, for every $M>0$, 
 $$\hat x_1\in\hat\pi^{-1}(x_1)\cap\left(\R\times(-\infty,-M]\right),\, \hat x_2\in\hat\pi^{-1}(x_2)\cap\left(\R\times [M, +\infty)\right),$$
  and some $n>0$ such that $\hat f^n(\hat x_1)\in \R\times [M, +\infty)$ and $\hat f^n(\hat x_2)\in \R\times (-\infty, -M]$. This implies that $\A$ is a Birkhoff region of instability for $\hat f$ as defined in \cite{lecalveztal}, and since $\hat f$ has two fixed points with positive rotational difference, Proposition D  of \cite{lecalveztal} implies that $\textrm{h}_{\textrm{top}}(\hat f)>0$, a contradiction with our hypotheses, and therefore $\overline\phi(x)=0$ for almost every $x$. 

\smallskip
 
Assume now for a contradiction that there exists some $\hat y \in \A$ and some $\varepsilon<1/2$ such that $\hat U=B_{\varepsilon}(\hat y)$ is wandering. This implies that $\hat U+(0,i)$ is also wandering for all $i\in\Z$. Let $U=\hat\pi(\hat U)$ and note that $\lambda(U)>0$. Then Birkhoff Ergodic Theorem implies the existence of $x$ in $U$ such that $\overline\phi(x)=0$ and such that the forward orbit of $x$ visits $U$ with frequency at least $\lambda(U)$. Hence, there exists an increasing sequence of integers $(n_j)_{j\in\N}$ such that $f^{n_j}(x)\in U$ and $\lim_{j\to+\infty}\frac{j}{n_j}\ge\lambda(U)$. But, if $\hat x\in\hat\pi^{-1}(x)\cap \hat U$, then for each $j>0$ there exists $i_j\in\Z$ such that $\hat f^{n_j}(\hat x)$ is in $\hat U+(0,i_j)$ and $i_{j_1}\not=i_{j_2}$ if $j_1\not=j_2$ since $\hat U+(0,i)$ are all wandering. Therefore one must have that $\max_{j<k}\{\mid i_j\mid \}\ge (k-1)/2$ which implies that either
$$\limsup_{m\to+\infty}\frac{1}{m}\sum_{n=0}^{m-1}\phi(f^n(x))\ge \lambda(U)/2$$
or
$$\liminf_{m\to+\infty}\frac{1}{m}\sum_{n=0}^{m-1}\phi(f^n(x))\le -\lambda(U)/2$$
must hold. But this contradicts $\overline\phi(x)=0$.   
\end{proof}

\section{Proof of Theorem C}\label{s.disipative}

This section is dedicated to the proof of Theorem C. Note that the statement of Theorem C has two conclusions, one dealing with the existence of a single invariant circloid in the attractor, and then the existence of rotational chaos. For the first conclusion, the dissipative hypotheses can be dropped, as shown by the following result.

\begin{thm}\label{t.single circloid}
Let \( f \in \mathrm{Homeo}_0(\A) \), let $x_0,x_1$ be two fixed points for $f$ with rotational difference $\rho\ge 1$, let $U_0,U_1$ form a $q$-dpn for $x_0,x_1$ and let $\mathcal{K}$ be an $f$-invariant annular continuum. Then, if $\rho>2/q$ and, for $i\in\{0,1\}$,
$$U_i\cap\mathcal{U}^{+}(\mathcal{K})\not=\emptyset\, \hbox{ and } U_i\cap\mathcal{U}^{-}(\mathcal{K})\not=\emptyset,$$
it holds that $\mathcal{K}$ contains a unique circloid $\mathcal{C}$, and its rotation set for any given lift of $f$ contains the interval $[l+1/q,l+\rho-1/q]$ for some integer $l$.
\end{thm}
\begin{proof}
Let us work with  $q=3$ and $\rho=1$, the arguments for the other cases are straightforward adaptations. 

\textcolor{black}{Let $W^{-}$ be the unbounded connected component of the complement of $\overline{\mathcal{U}^{+}(\mathcal{K})}$, and let $U^{+}$ be the unbounded connected component of the complement of $\overline{W^{-}}$. Then Lemma~3.2 of \cite{jager} shows that $\mathcal{C}^{+}=\mathrm{Fill}(\partial U^{+})$ is an invariant circloid, and by direct verification one sees that $\mathcal{U}^{+}(\mathcal{C}^{+})=U^{+}$ and $\mathcal{U}^{-}(\mathcal{C}^{+})=W^{-}$. Furthermore, if $\mathcal{C}\subset\mathcal{K}$ is another invariant circloid, then $\mathcal{U}^{+}(\mathcal{K})$ is a subset of $\mathcal{U}^{+}(\mathcal{C})$ and as $\mathcal{U}^{-}(\mathcal{C})$ is the unbounded connected component of the complement of $\overline{\mathcal{U}^{+}(\mathcal{C})}$, it follows that $\mathcal{U}^{-}(\mathcal{C})\subset W^{-}=\mathcal{U}^{-}(\mathcal{C}^{+})$. Therefore $\mathcal{C}\preccurlyeq\mathcal{C}^{+}$. One defines likewise $U^{-}$ as the unbounded connected component of the complement of $\overline{W^{+}}$, where $W^{+}$ is the unbounded connected component of the complement of $\overline{U^{-}(\mathcal{K})}$. If $\mathcal{C}^{-}=\mathrm{Fill}(\partial U^{-})$, one shows again that }for any circloid contained in $\mathcal{K}$, one has $\mathcal{C}^{-}\preccurlyeq\mathcal{C}$. Let us show then that $\mathcal{C}^{-}=\mathcal{C}^{+}$, which implies that $\mathcal{K}$ has a unique circloid.

Consider the lift $F$ of $f$ such that $\rho(x_0,F)=0$. The following discussion is based on Theorem \ref{t.proximal}.
In case $x_0$ is in $U^{+}$, then $\rho^{\mathrm{end}}_{+}(\mathcal{C}^{+},F)\le 1/3$. If $x_0\in \mathcal{C}^{+}$, then $0\in\rho(\mathcal{C}^{+},F)$ and if $x_0\in\mathcal{U}^{-}(\mathcal{C}^{+})$, then  $\rho^{\mathrm{end}}_{-}(\mathcal{C}^{+},F)\le 1/3$. In any case, $\rho(\mathcal{C}^{+},F)$ has a point in $(-\infty, 1/3]$. The same argument with $x_1$ shows that $\rho(\mathcal{C}^{+},F)$ has a point in $[2/3, +\infty)$, which shows that the length of $\rho(\mathcal{C}^+,F)$ is bounded below by $\frac{1}{3}$.

In order to finish the proof, it is only left to show that $\mathcal{C}^{+}=\mathcal{C}^{-}$ is the unique circloid in 
$\mathcal{K}$. Assume then for a contradiction that $\mathcal{C}^{-}\not=\mathcal{C}^{+}$. We will break the proof into two different cases. Case 1 considers the case when the intersection of $\mathcal{C}^-$ and $\mathcal{C}^+$ is nonempty, whereas Case 2 deals with the case where the intersection is empty.

Assume we are  in Case 1, and let $C=\mathcal{C}^{+}\cap\mathcal{C}^{-}$. We can then apply Theorem \ref{t.inter} and conclude the existence of a value $r$ such that
\[
r = \rho(\mathcal{C}, F)= \rho^{\mathrm{end}}_-(\mathcal{C}^+,F)
= \rho^{\mathrm{end}}_+(\mathcal{C}^-,F).
\]

Now, \textcolor{black}{either $r\notin [-1/3, 1/3]$, or $r\notin [2/3, 4/3]$ since the two intervals are disjoint. If $r\notin [-1/3, 1/3]$, then $x_0$ cannot be in $C$, as otherwise $r=0$, $x_0$ cannot belong to $\mathcal{U}^{+}(\mathcal{C}^{-})$ by Theorem \ref{t.proximal}, and $x_0$ cannot belong to $\mathcal{U}^{-}(\mathcal{C}^{+})$, again by Theorem \ref{t.proximal}. But 
$$\A=C\cup \mathcal{U}^{+}(\mathcal{C}^{-}) \cup \mathcal{U}^{-}(\mathcal{C}^{+}),$$
which is a contradiction since $x_0\in\A$. A similar argument with $x_1$ yields a contradiction if $r$ is not in $[2/3, 4/3]$.} Thus Case 1 cannot hold.

Assume now that we are in Case 2. Hence 
$A:=\mathcal{U}^{+}(\mathcal{C}^{-})\cap \mathcal{U}^{-}(\mathcal{C}^{+})$ 
is a topological annulus, so that $\mathcal{U}^{+}(\mathcal{C}^{-})\cup \mathcal{U}^{-}(\mathcal{C}^{+})=\A$. Consider
the case that $x_0\in\mathcal{U}^{-}(\mathcal{C}^{+})$, the complementary cases can be treated analogously.

Then, $\rho^{\mathrm{end}}_{-}(\mathcal{C}^{+},F)\le \frac{1}{3}$. Again, by Theorem~\ref{t.proximal}, $x_1$ cannot also belong to $\mathcal{U}^{-}(\mathcal{C}^{+})$. Thus $x_1$ is in $\mathcal{U}^{+}(\mathcal{C}^{-})$ and so Theorem
\ref{t.proximal} implies
$$\rho^{\mathrm{end}}_+(\mathcal{C}^-,F)\geq \frac{2}{3}.$$
Thus, $x_0$ cannot belong to $\mathcal{U}^{+}(\mathcal{C}^{-})$.

Let  $\sigma:[0, 1] \to U_0$ be an arc with $\sigma(0)=x_0$, such that $\sigma\mid_{[0,1)}$ is a ray in $\mathcal{U}^{-}(\mathcal{C}^{+})$ landing on a point $y^{+}$ in $\mathcal{C}^{+}$.  Since $x_0\notin \mathcal{U}^{+}(\mathcal{C}^{-})$, there must exist some maximal $t_1\ge 0$ such that $\sigma(t_1)\in\mathcal{C}^{-}$. Denote by $\alpha$ the restriction of $\sigma$ to $(t_1,1)$.  If $\tilde A$ is the lift of the annulus $\mathcal{U}^{+}(\mathcal{C}^{-})\cap \mathcal{U}^{-}(\mathcal{C}^{+})$,  then $\widetilde A$ is a topological disk in $\R^2$ invariant by the translation $T(x,y)=(x+1,y)$. If $\widetilde x_0$ is a lift of $x_0$, $\widetilde \sigma$ the lift of $\sigma$ passing through $\tilde{x}_0$, and $\tilde \alpha$ the lift of $\alpha$ contained in $\widetilde \sigma$, then $\tilde \alpha$ is an oriented  line in $\widetilde A$ and we can denote by $L(\tilde \alpha)$ and $R(\tilde \alpha)$ the connected components of $\tilde{A}\setminus \tilde{\alpha}$ lying to the left and to the right of $\tilde{\alpha}$
respectively.

Note that as $U_0\cup f^3(U_0)$ is inessential it holds that $F^3(\tilde \alpha)$ must be disjoint from $T^p(\tilde \alpha)$ for every integer $p\neq 0$. We claim that 
$$F^3(\tilde \alpha)\subset T(L(\tilde \alpha)).$$ 
Indeed, if this were false, we would find as in the proof of Theorem~\ref{t.proximal} that $\widetilde \sigma\cup F^3(\widetilde \sigma)$ separates $\pi^{-1}(\mathcal{U}^{-}(\mathcal{C}^{+}))$, and that $T\left(\widetilde \sigma\cup F^3(\widetilde \sigma)\right)$ intersects different connected components of this complement, which implies that $\widetilde \sigma\cup F^3(\widetilde \sigma)$ is essential, a contradiction.

Let $h:A\to\T^1\times(0,1)$ be the the homeomorphism associated to the prime-end compactification $\T^1\times[0,1]$ of $A$, $H$ a lift of $h$, $f^{\mathrm{end}}$ the induced homeomorphism on $\T^1\times[0,1]$ and $F^{\mathrm{end}}$ the lift of $f^{\mathrm{end}}$ associated with $F$. Let also $\beta$ be the closure of $h(\alpha)$, an arc joining the two boundaries of $\T^1\times[0,1]$, and $\widetilde {\beta}$ a lift of $\beta$. Then, since $F^3(\tilde \alpha)\subset T(L(\tilde \alpha))$, we obtain that 
$$(F^{\mathrm{end}})^3(\widetilde{\beta})\subset T(L(\widetilde \beta)).$$
 This implies that, if $(a,0)$ is the intersection of $\widetilde\beta$ and $\R\times\{0\}$, then 
 $$(F^{\mathrm{end}})^3((a,0))=(b,0)\mbox{ where }b<a+1.$$
 Again, by a classical argument concerning one-dimensional rotation numbers, this shows that  
 $$\rho^{\mathrm{end}}_+(\mathcal{C}^-,F)\le \frac{1}{3},$$ 
 our final contradiction.
\end{proof}
 
To finish the proof of Theorem C, let us recall Theorem 3.14 of \cite{paposa}.
\begin{thm}\label{thmpaposa}
Let $f\in \mathrm{Homeo}_0(\A)$. Assume that there exists a pair of disjoint essential curves $\gamma^-, \gamma^+ $ such that the annulus \( E \) bounded by them satisfies \( f^n(E) \subset \mathrm{int}(E) \) for some \( n \in \mathbb{N} \).  If there exist four different periodic points $y_1, y_2, y_3$ and $y_4$, each having a different rotation number, and such that all these points are accumulated both by $\cup_{i=0}^{\infty}f^i(\gamma^{+})$ and by $\cup_{i=0}^{\infty}f^i(\gamma^{-})$, then $f$ has rotational chaos.
\end{thm}

\begin{proof}[Proof of Theorem C]\textcolor{black}{Recall that, by hypothesis, $F$ lifts $f\in\homeo$, there exists $n$ in $\N$ and a  pair of disjoint essential curves \( \gamma^-, \gamma^+ \) such that, if $E$ is the annulus bounded by them, then  \( f^n(E) \subset \mathrm{int}(E) \), and such that $E$ contains two fixed points \( x_0, x_1 \) with rotational difference \( \rho \geq 1 \). In particular, if $\mathcal{K}=\bigcap_{i\in\N}f^{i}(E)$, then $\mathcal{K}$ is an invariant attractor and the $\omega$-limit of any point in $E$ is contained in $\mathcal{K}$. Furthermore, we know that there exists some $3$-dpn \( U_0 \) and \( U_1 \) for \( x_0 \) and \( x_1 \) such that the orbits of both \( \gamma^- \) and \( \gamma^+ \) visit both \( U_0 \) and \( U_1 \).} 

The fact that the orbit of $\gamma^{-}$, contained in the invariant set $\subset\mathcal{U}^{-}(\mathcal{K})$, visits $U_0$ implies that $U_0$ intersects $\mathcal{U}^{-}(\mathcal{K})$. Similar arguments show that, for $i$ in $\{0,1\}$ 
$$U_i\cap \mathcal{U}^{+}(\mathcal{K})\not=\emptyset,\, \hbox{and }U_i\cap \mathcal{U}^{-}(\mathcal{K})\not=\emptyset.$$ We may then apply Theorem~\ref{t.single circloid} and deduce that $\mathcal{K}$ contains a single circloid $\mathcal{C}$, whose rotation set is not a singleton, and whose boundary lies in $\partial \mathcal{U}^{+}(\mathcal{K})$ as well as in $\partial \mathcal{U}^{-}(\mathcal{K})$. It follows that 

$$\partial \mathcal{C}\subset \left(\bigcap_{j=0}^{\infty}\overline{\bigcup_{i=j}^{\infty}f^{i}(\gamma^{-})}\cap \bigcap_{j=0}^{\infty}\overline{\bigcup_{i=j}^{\infty}f^{i}(\gamma^{+})}\right).$$
In \cite{koropeki}, it is shown that any circloid $\mathcal{C}$ which is invariant under $f \in \homeo$ with lift $F$, and whose rotation set $\rho(\mathcal{C},F)$ is a non-trivial interval contains, for every rational number $\frac{p}{q} \in \rho(\mathcal{C},F)$, a periodic point $x$ of $f$ realizing this rotation number. That is,
\[
F^q(\tilde{x}) = \tilde{x} + (p,0)
\]
for any lift $\tilde{x}$ of $x$.

Moreover, in \cite{koropass}, it is shown that such a periodic point $x$ can be chosen in the boundary of the circloid $\mathcal{C}$ (recall that these continua may have non-empty interior, as explained in the preliminaries). 

Our result then follows from Theorem~\ref{thmpaposa}.
\end{proof}

\section{Computer-Assisted Proofs}\label{s.numerics}

As mentioned in the introduction, the aim of this paper is to reformulate the topological theory of annular chaos---developed since the work of Poincar\'e and Birkhoff---into a framework that can be effectively tested for a given map using computational methods. In this section, we present a first series of Computer Assisted Proofs (CAPs) based on the results introduced here.
We show the most basic extracts of a work in preparation \cite{theoappandnumer}, where we deal
with some well-known analytic families of maps.  The point we want to emphasize here is that the software required is simple and can be easily adapted
from one case to another. The tool for these CAPs is the open source library CAPD::DynSys for the $C^{++}$ language \cite{compassproof}, which is used for the validation of numerical finite orbits in our systems. These validations
are based on the interval arithmetic method. We would like to thank 
 Maik Gr\"oger\footnote{Faculty of Mathematics and Computer Science,
Jagiellonian University, Krakow, Poland. } and Maciej Capinski\footnote{Faculty of Applied Mathematics, AGH University of Science and Technology, Krakow, Poland.}, who were instrumental in constructing the first versions of the software used here, and who co-author the above mentioned article in preparation.

\smallskip

The first class of examples where one can try to apply the results isgiven by analytic families of diffeomorphisms of $\A$, 
and some of these cases are discussed in what follows. However, it should be noted that another important class of applications is Poincar\'{e}-return maps of flows in the Hamiltonian and dissipative cases, and we expect to explore those in the near future.

\medskip

Let us explain what is shown in the following lines. For the Hamiltonian case first and then for the dissipative one, we describe how the implemented software works and show some pictures that help to visualize what is done. Then, some lists
displaying the first examples for which we obtained the CAPs are introduced. The visualizations were
made using Python whereas the programs themselves were coded in $C^{++}$, as it is needed to work
 with the library CAPD::DynSys\footnote{http://capd.ii.uj.edu.pl}. The programs were run on typical commercially available machines. 

 \subsection{Hamiltonian case.}

Let us now explain the Hamiltonian case, for which Theorem B is used. The CAPs
in this case are much simpler, as only the analog of point (1) of the dissipative case needs to be implemented, 
and, moreover, the points whose orbits are to be validated belong to a fixed horizontal line.
The structure of the needed program in this case is the following.

\begin{enumerate}

\item For the maps tested here, we choose two fixed points $x_0,x_1$ which lie inside
the projection of the band $\R\times[-1,1]$ and have a rotational difference of 2, and taking $L_2=5$ one obtains that $N_{L_2}(f)\leq 1$ as required by the conditions of Theorem B.  Call $\gamma^+,\gamma^-$ the upper and lower
boundary components of $A_{L_2}$.
Hence, we try to find an orbit going from one of the connected components of the complement of $A_{L_2}$ to the other one. This orbit is first numerically estimated and then validated working with interval arithmetic, using
the CAPD::DynSys library. 


\end{enumerate}

The numerics shown here were calculated for maps that are variations of the Standard Family (S.F.). The S.F.
is given by those maps having lifts
$$F_{a,b}(x,y)=(x+a\,y\,,\,y+b\sin(2\pi\,(x+a\,y))),\ a\in\R,$$ 
and the variations here are given by those maps having lifts 
$$F_{\textbf{h},\textbf{v}}(x,y)=(x+\textbf{h}(y)\,,\,y+\textbf{v}\,(\sin(2\pi\,(x+\textbf{h}(y))))),$$
where
\begin{itemize}
\item $\textbf{h}\in C^{\infty}(\R),\, [-1,1]\subset \textbf{h}([-1,1])$,
\item $\textbf{v}\in C^{\infty}(\R),\ \int_{[0,1]}\textbf{v}(\sin(2\pi\,x))\,dx=0$,
\item $\textbf{h}$ lifts a map of $\T^1$.
\end{itemize}
It turns out that these maps contain all the possible analytic diffeomorphisms in $\A$ which lift
maps of $\T^2$ having 0 mean
vertical displacement. We write them in this way
so that they can be regarded as variations of elements of the S.F.

\smallskip

For simplicity, we will instead of presenting $\textbf{v}$, present a function $w \in C^{\infty}(\R)$, and take $\textbf{v}(x)$ to be $w(x)-\int_{[0,1]}w(\sin(2\pi\,x))\,dx$, thus ensuring that $\int_{[0,1]}\textbf{v}(\sin(2\pi\,x))\,dx=0$. 
The conditions for $\textbf{h}$ and $\textbf{v}$ imply the existence of the required fixed points $x_0,x_1$,
one lying in the projection of $y\in \textbf{h}^{-1}(-1)$ and the other in the projection of $y\in \textbf{h}^{-1}(1)$, and
having a rotational difference of 2. The condition on the mean value of $\textbf{v}$, namely that $\int_{[0,1]}\textbf{v}(\sin(2\pi\,x))\,dx=0$ is needed
due to the following proposition.
\begin{prop}
Given any $f_{\textbf{h},\textbf{v}}$ as above, then either every point is non-wandering or $f_{\textbf{h},\textbf{v}}$ has a  rotational horseshoe.
\end{prop}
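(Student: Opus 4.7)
The plan is to reduce this proposition to Proposition~\ref{p.hamiltonianisnonwandering} by verifying its hypotheses for $f_{\textbf{h},\textbf{v}}$. First I would compute the Jacobian of the lift $F_{\textbf{h},\textbf{v}}$: writing $u=x+\textbf{h}(y)$, the two terms involving $2\pi\,\textbf{v}'(\sin(2\pi u))\cos(2\pi u)\,\textbf{h}'(y)$ exactly cancel, so the Jacobian is identically $1$. Since $\textbf{h}$ lifts a map of $\T^1$ and $\sin$ is $1$-periodic, $F_{\textbf{h},\textbf{v}}$ descends to a homeomorphism of $\T^2$ that preserves Lebesgue measure $\lambda$. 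By Fubini, the vertical displacement $\phi(x,y)=\textbf{v}(\sin(2\pi(x+\textbf{h}(y))))$ satisfies
\[
\int_{\T^2}\phi\,d\lambda \;=\; \int_0^1\textbf{v}(\sin(2\pi x))\,dx \;=\;0,
\]
using the compatibility condition imposed on $\textbf{v}$.

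Next I would exhibit a pair of fixed points of $\hat f = f_{\textbf{h},\textbf{v}}$ with rotational difference $2$. Since $\int_0^1 \textbf{v}(\sin(2\pi x))\,dx = 0$, $\textbf{v}$ must vanish at some $c \in [-1,1]$, and we can pick $\xi_0\in\R$ with $\sin(2\pi\xi_0)=c$. The condition $[-1,1]\subset \textbf{h}([-1,1])$ combined with the intermediate value theorem provides $y_+, y_-\in [-1,1]$ with $\textbf{h}(y_+)=1$ and $\textbf{h}(y_-)=-1$. Setting $x_\pm = \xi_0 - \textbf{h}(y_\pm)$, one checks directly that $(x_\pm, y_\pm)$ project to fixed points of $\hat f$ whose rotation vectors under the natural lift are $(\pm 1, 0)$, giving rotational difference $2$.

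With all hypotheses in place, I would invoke the proof (not merely the statement) of Proposition~\ref{p.hamiltonianisnonwandering} to obtain the desired dichotomy. If $\hat f$ is non-wandering we are in the first alternative. Otherwise, Birkhoff's ergodic theorem applied to $\phi$ and $\lambda$ yields a measurable $\overline\phi$ with $\int\overline\phi\,d\lambda=0$. The alternative $\overline\phi=0$ almost everywhere is ruled out by the wandering-ball argument from the proof of Proposition~\ref{p.hamiltonianisnonwandering} (that argument converts a wandering open set of positive measure into a forced strictly positive or strictly negative Birkhoff average, contradicting $\overline\phi=0$ a.e.). Hence there must exist points with $\overline\phi(x_1)<0<\overline\phi(x_2)$, so that $\A$ is a Birkhoff region of instability for $\hat f$. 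Combined with the two fixed points of rotational difference $2$ just constructed, Proposition D of \cite{lecalveztal} produces the desired rotational horseshoe, placing us in the second alternative.

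The main obstacle, which is conceptual rather than technical, is recognizing that Proposition~\ref{p.hamiltonianisnonwandering} is really establishing a dichotomy — non-wandering behavior versus a Birkhoff region of instability — once its zero-entropy hypothesis is set aside, and that the Birkhoff region of instability horn feeds directly into Proposition D of \cite{lecalveztal} to yield the rotational horseshoe. The technical content is limited to the direct determinant computation and two intermediate value theorem arguments.
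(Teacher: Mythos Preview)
Your argument is correct and in fact cleaner than the paper's own proof. The paper splits into two cases according to whether $\textbf{h}(y+1)-\textbf{h}(y)$ is zero or a nonzero integer $p$: in the first case it invokes Proposition~\ref{p.hamiltonianisnonwandering} (homotopic-to-identity torus map), while in the Dehn-twist case $p\neq 0$ it appeals to the separate machinery of vertical rotation intervals for Dehn-twist-isotopic torus maps (citing \cite{doeff1997rotation,addas2002existence,nunes2022transitivity,addas2012dynamics}). You instead observe that the vertical displacement $\phi(x,y)=\textbf{v}(\sin(2\pi(x+\textbf{h}(y))))$ is $\Z^2$-periodic regardless of $p$, so the Birkhoff-average argument from the proof of Proposition~\ref{p.hamiltonianisnonwandering} runs uniformly in both cases and yields the dichotomy ``nonwandering versus Birkhoff region of instability'' directly, with the latter feeding into Proposition~D of \cite{lecalveztal}. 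This avoids the four extra references and the case split entirely; the price is that you must unpack the proof of Proposition~\ref{p.hamiltonianisnonwandering} rather than quote its statement, since you cannot assume $h_{\textrm{top}}=0$ a priori. One cosmetic point: your claim that ``$\textbf{v}$ must vanish at some $c\in[-1,1]$'' is not quite what you use; what the zero-mean condition gives (and what you actually need) is a point $\xi_0$ with $\textbf{v}(\sin(2\pi\xi_0))=0$, which follows from the intermediate value theorem applied to the continuous periodic function $x\mapsto \textbf{v}(\sin(2\pi x))$.
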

\begin{proof}
There are two cases to consider. If  $\textbf{h}(x+1)=\textbf{h}(x)$, then  $f_{\textbf{h},\textbf{v}}$ lifts a map of $\T^2$ homotopic to the identity that preserves Lebesgue measure, and satisfies the hypotheses of Proposition~\ref{p.hamiltonianisnonwandering}, in which case we are done. If not, then $\textbf{h}(x+1)=\textbf{h}(x)+p$ for some non-zero integer $p$, in which case $f_{\textbf{h},\textbf{v}}$ lifts a homeomorphism of $\T^2$ homotopic to the Dehn twist $(x,y)\to(x+py, y)$. In this case, one can define a vertical rotation interval for the map (see \cite{doeff1997rotation,addas2002existence}), and the Lebesgue rotation number, by the choice of $\textbf{v}$, is null. Then either the rotation interval is nondegenerate, in which case $f_{\textbf{h},\textbf{v}}$ has positive topological entropy (see, for instance,  sec. 5.2 in \cite{nunes2022transitivity}) or it is just $\{0\}$, in which case every orbit of $f_{\textbf{h},\textbf{v}}$ is bounded (see \cite{addas2012dynamics}) and the dynamics is non-wandering.  
\end{proof}

The previous proposition tells us that in order to show the existence of rotational horseshoes for $f_{\textbf{h},\textbf{v}}$, we can just try to apply Theorem B assuming that the involved dynamics is non-wandering, because otherwise we are already assured positive entropy. Note that, if one studies return maps to Poincar\'e's annular sections for Hamiltonians in two degrees of freedom restricted to compact energy surfaces, then Theorem B can be applied directly, as the return map preserves a finite measure of full support, which ensures that the map is non-wandering. 

\smallskip

Therefore, for the introduced setting, in order to show the existence of chaos it suffices to find
orbits going from the line $y=-5$ to the line $y=5$ (or vice versa). We have tested this for several different elementary functions $\textbf{v}$, and for two different functions $\textbf{h}$, one linear and the other a trigonometric function, which destroys the twist condition. Note that, while there exists a vast literature for the Standard Family (see for instance \cite{mackay2020hamiltonian,gelfreich1999proof}), the arguments there are usually specific to that family, and each variation such as those considered here would require a huge effort in order to be treated by similar methods.

The following pictures are taken from the visualization of the algorithm implemented in Python. 
For the map of the family given by
$$\textbf{h}(y)=\sin(2\pi\,y)\ \ \ ,\ \ \ w(x)=x(1-x),$$
Figure \ref{cons1} shows the initial lines,
Figure \ref{cons1b} shows the first iterate of the lines,
Figure \ref{cons2} an intermediate iteration of the lines for which the conditions
of Theorem B are still not fulfilled, whereas the last picture shows an iterate
of them for which these conditions are fulfilled, and hence
it is possible to infer the existence of chaos.

\begin{figure}[h]
\begin{minipage}{0.35\linewidth}
\includegraphics[width=1\linewidth]{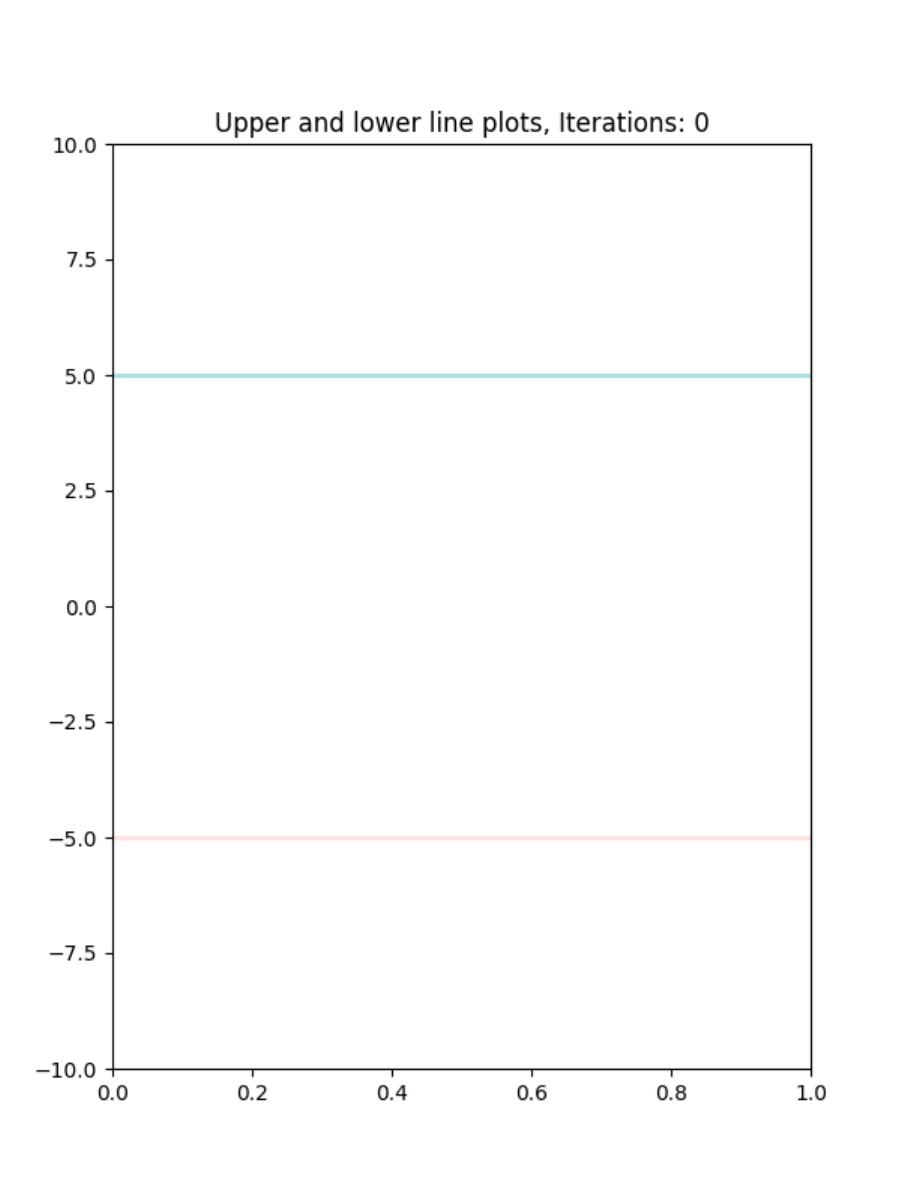}
\caption{}\label{cons1}
\end{minipage}
\begin{minipage}{0.35\linewidth}
\includegraphics[width=1\linewidth]{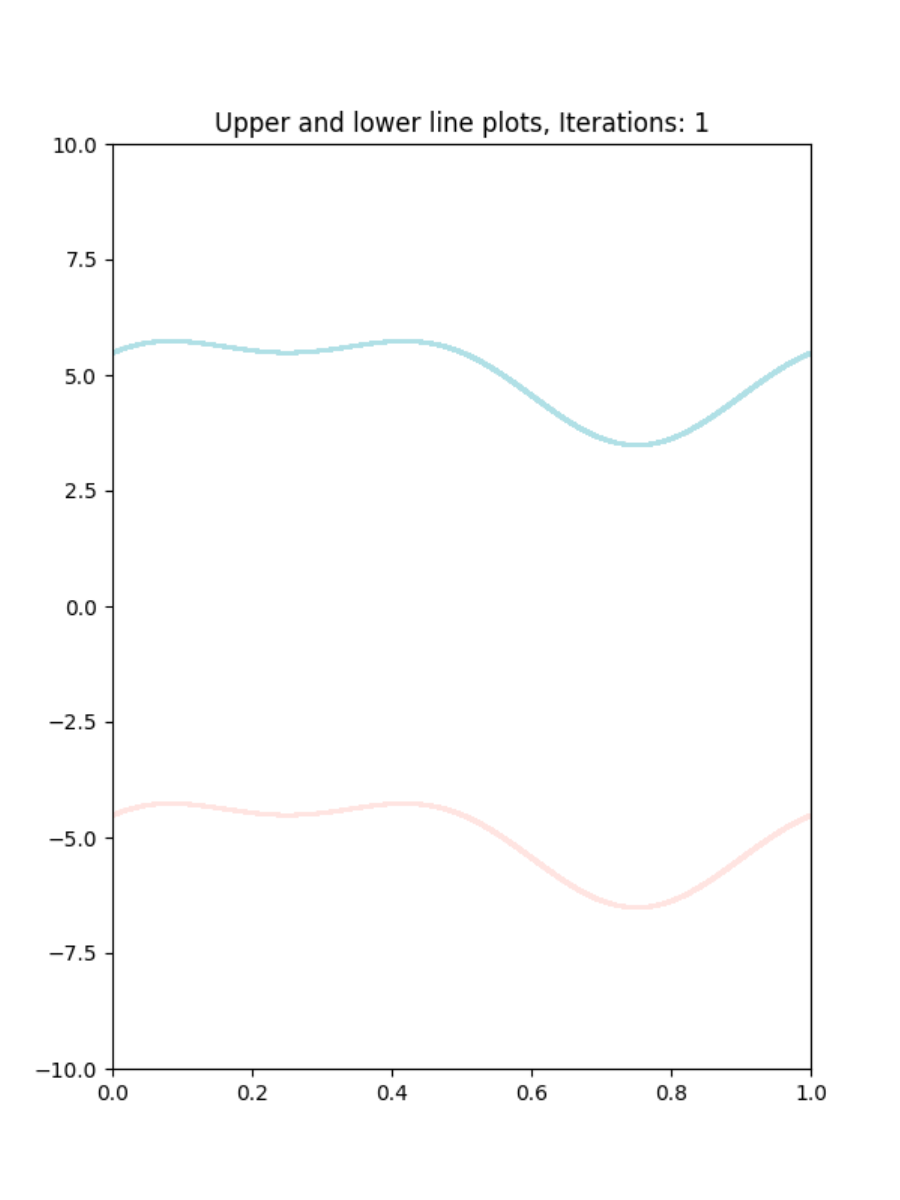}
\caption{}\label{cons1b}
\end{minipage}
\end{figure}
\begin{figure}[h]
\begin{minipage}{0.35\linewidth}
\includegraphics[width=1\linewidth]{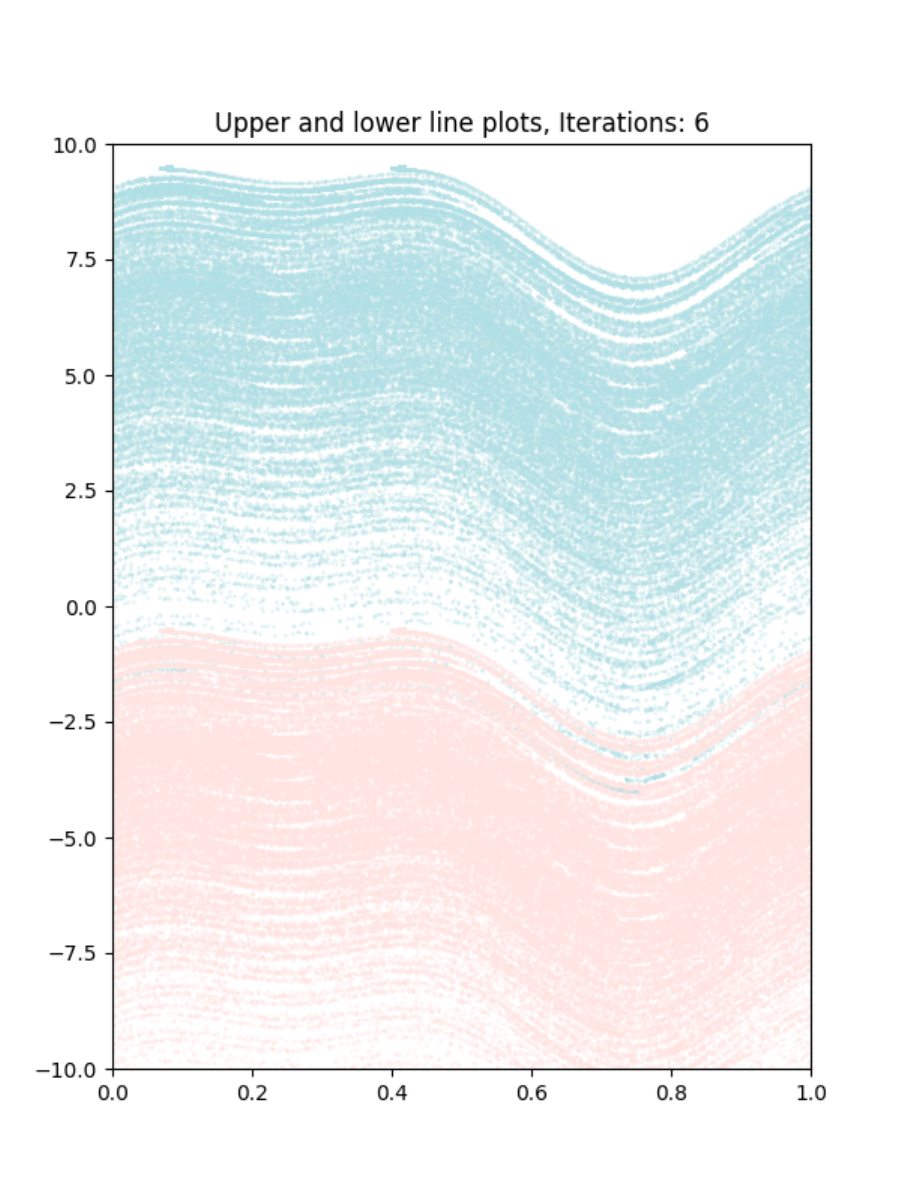}
\caption{}\label{cons2}
\end{minipage}
\begin{minipage}{0.35\linewidth}
\includegraphics[width=1\linewidth]{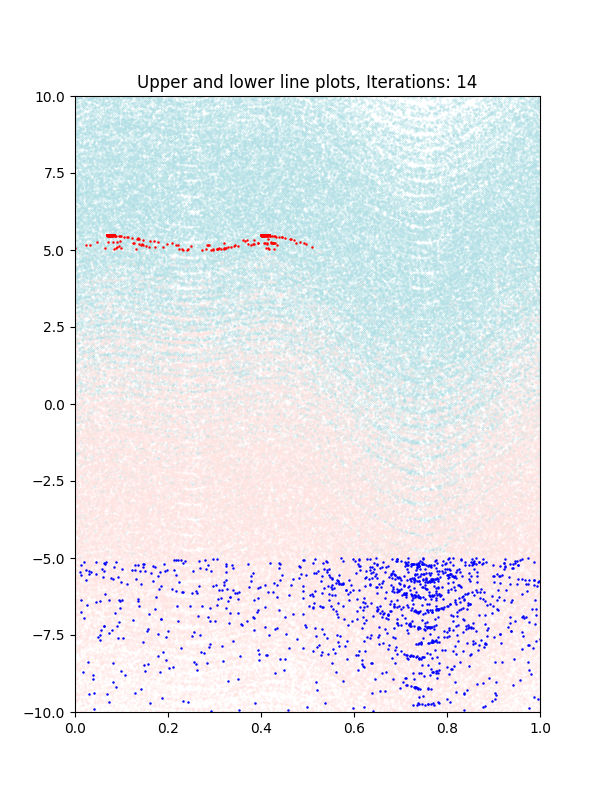}\label{cons3}
\caption{}
\end{minipage}
\end{figure}

The CAPs are in this case straightforward as we only deal with item (1):
we just find a numerical orbit going from $y=-5$ to $y\geq 5$ and then validate it
using Interval Arithmetics by means of the CAPD::DynSys library. 

\smallskip
 
Tables \ref{tb:tableconservative1} and \ref{tb:tableconservative2}  show several situations where the procedure described above gave CAPs of the existence of chaos. It deals with variations of the Standard Family, and its purpose is also to show the flexibility of the method. The tables list the specific maps that were studied as well as the number of iterates needed to find an orbit going from one connected component of the complement of $A_5$ to the other. 
Table \ref{tb:tableconservative1} deals with twist maps while Table \ref{tb:tableconservative2} deals with the non-twist cases.
The first case of Table \ref{tb:tableconservative1} is the classical
Standard Map. 

\large
\begin{table}[ht]
\begin{centering}
\begin{minipage}{0.43\textwidth}
\begin{tabular}{|c|c|c|}\hline
$\textbf{h}(y)$ & $\textbf{w}\,(x)$ & \textbf{It.}\\ \hline
\ \ $y$\ \ &\ \ $x$\ \ &\ \ 12 \ \ \\\hline
\ \ $y$\ \ &\ \ $x\,(1-x)$\ \ &\ \ 14\ \ \\\hline
\ \ $y$\ \ &\ \ $\tan(x)$\ \ &\ \ 11 \ \ \\\hline
\ \ $y$\ \ &\ \ $3\,\ln(x+2)$\ \ & \ \ 11\ \ \\\hline
\ \ $y$\ \ &\ \ $e^x-1$\ \ &\ \ 9 \ \ \\\hline
\end{tabular}
\caption{\textbf{Twist}}
\label{tb:tableconservative1}
\end{minipage}
\hfill
\begin{minipage}{0.45\textwidth}
\begin{tabular}{|c|c|c|}\hline
$\textbf{h}(y)$ & $\textbf{w}\,(x)$ & \textbf{It. }\\ \hline
\ $\sin(2\pi\,y)$\ &\ \ $x$\ \ &\ \ 14 \ \ \\\hline
\ $\sin(2\pi\,y)$\ &\ \ $x\,(x-1)$\ \ &\ \ 8\ \ \\\hline
\ $\sin(2\pi\,y)$\ &\ \ $\tan(x)$\ \ &\ \ 9 \ \ \\\hline
\ $\sin(2\pi\,y)$\ &\ \ $3\,\ln(x+2)$\ \ & \ \ 10\ \ \\\hline
\ $\sin(2\pi\,y)$\ &\ \ $e^x-1$\ \ &\ \ 9 \ \ \\\hline
\end{tabular}
\caption{\textbf{Non-twist}}
\label{tb:tableconservative2}
\end{minipage}
\end{centering}
\end{table}

\newpage
\normalsize

\subsection{Dissipative case}

In the dissipative setting we treat here the Dissipative Standard Family $f_{a,b}$, having as lifts the maps
$$F_{a,b}(x,y)=\left(x+a\, y\,,\,b\,y+\sin(2\pi\,(x+a\,y))\right)\,,\ a\in\R,\,b\in(0,1).$$

\smallskip

%
%
%

The idea is to show numerically, but formally, that  the hypotheses of Theorem C 
are fulfilled for some prescribed maps and hence obtain a proof of the existence of a rotational horseshoe for the given diffeomorphism. 
In the following, we describe a general procedure, which can be done for any $f\in\homeo$ having a global annular attractor
carrying fixed points with different rotation numbers.

\begin{enumerate}

\item Two fixed points $x_0,x_1$ with a rotational difference of $\rho$ are chosen together
with a small positive $r$. Then, we try to find four points $x_0^+,x_0^-$ in $B(x_0,r)$ and
$x_1^+,x_1^-$ in $B(x_1,r)$\footnote{The distance chosen here is  the one induced by the $\max$ norm for the sake of simplicity in what concerns the following lines.} so that the backward orbits of $x_0^+,x_1^+$ visit the region above an essential circle $B$ whose forward orbit
lies below itself, and the backward orbits of $x_0^-,x_1^-$ visit the region below an essential circle  $B'$ whose forward orbit lies above itself.

\item Then, we need to check that the points $x_0,x_0^-,x_0^+$ can be included in a connected neighborhood
$U_0\subset B(x_0,r)$ and the points $x_1,x_1^-,x_1^+$ can be included in a connected neighborhood
$U_1\subset B(x_1,r)$ so that $U_0,U_1$ form an $N$-dpn, with $N$ being 3 if $\rho=1$, 2 if $\rho=2$, 
$1$ if $\rho\geq 3$.
It suffices to show that there exist arcs $\sigma_0, \sigma_1$, where $\sigma_i$ contains the points $x_i,x_i^-,x_i^+$ for $i\in\{0,1\}$, and such that both $\bigcup_{j=0}^{N}f^j(\sigma_0)$ and $\bigcup_{j=0}^{N}f^j(\sigma_1)$ are inessential and disjoint, as one could just then take some neighborhoods $U_0$ and $U_1$ of $\sigma_0$ and $\sigma_1$, respectively.


\end{enumerate}

If these two conditions are fulfilled, we have the existence of rotational chaos for the map $f$, as a consequence of
Theorem C.

\smallskip

In what follows, pictures are displayed representing the process for the element of the D.S.F. given
by $a=3,\,b=0.8$. Figure \ref{dis1} shows the small neighborhoods around the fixed points $x_0,x_1$ having sky-blue and light-red colors, and the lines
$B,B'$ given in this case by the projection of
$y=6$ and $y=-6$, respectively. In this case $N=3$ as the rotational difference of $x_0,x_1$ is 12. We need that the neighborhoods centered at $x_0,x_1$ form a $1$-dpn 
and both reach the lower component of $B'$ and
the upper component of $B$ under backwards iterations. The Figures \ref{dis2}
and \ref{dis3}
show intermediate backwards iterates of these neighborhoods having the same mentioned colors as far as they don't reach the mentioned lines. The last
one, \ref{dis4}, shows the change of color when the conditions of Theorem C are numerically fulfilled, namely, when
the neighborhoods reach under backwards iterations the two lines.

\begin{figure}[h]
\begin{minipage}{0.35\linewidth}
\includegraphics[width=1\linewidth]{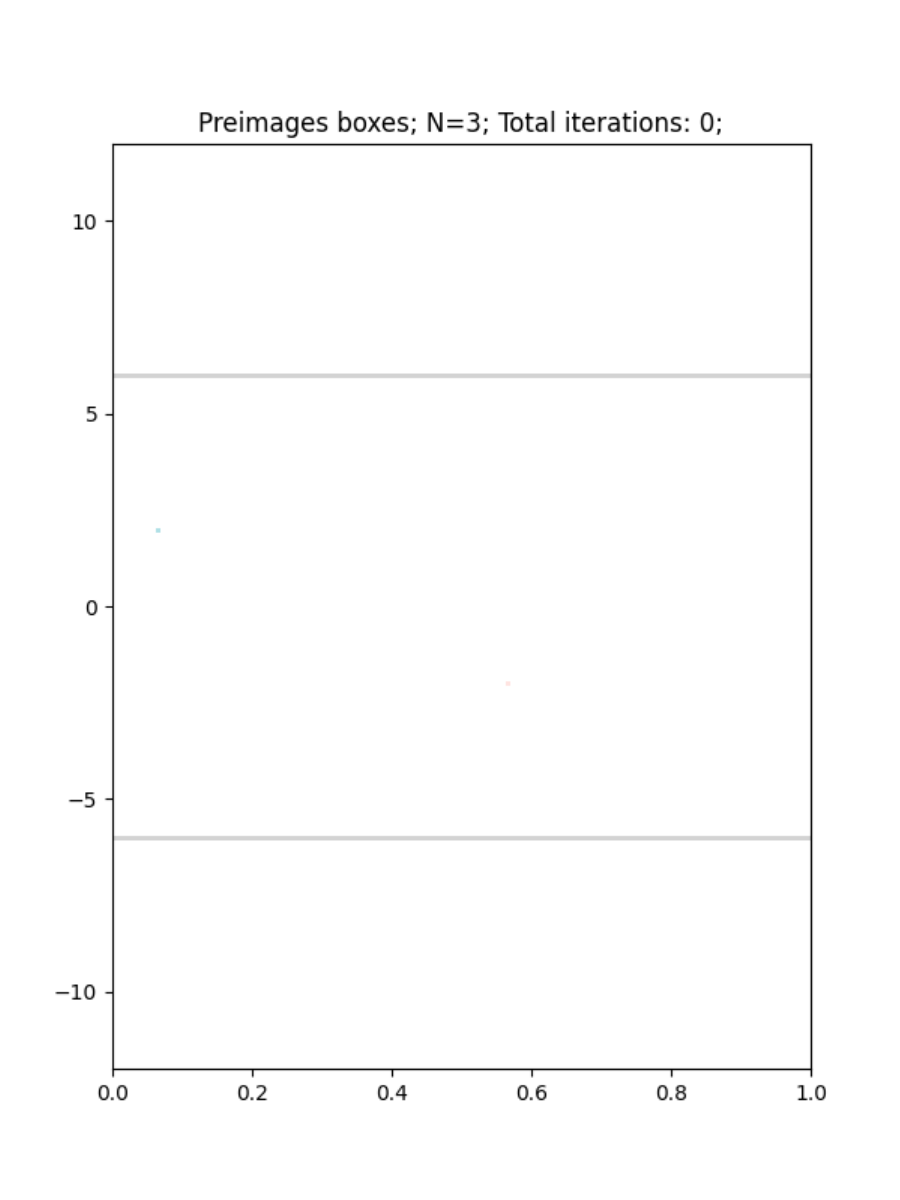}
\caption{}\label{dis1}
\end{minipage}
\begin{minipage}{0.35\linewidth}
\includegraphics[width=1\linewidth]{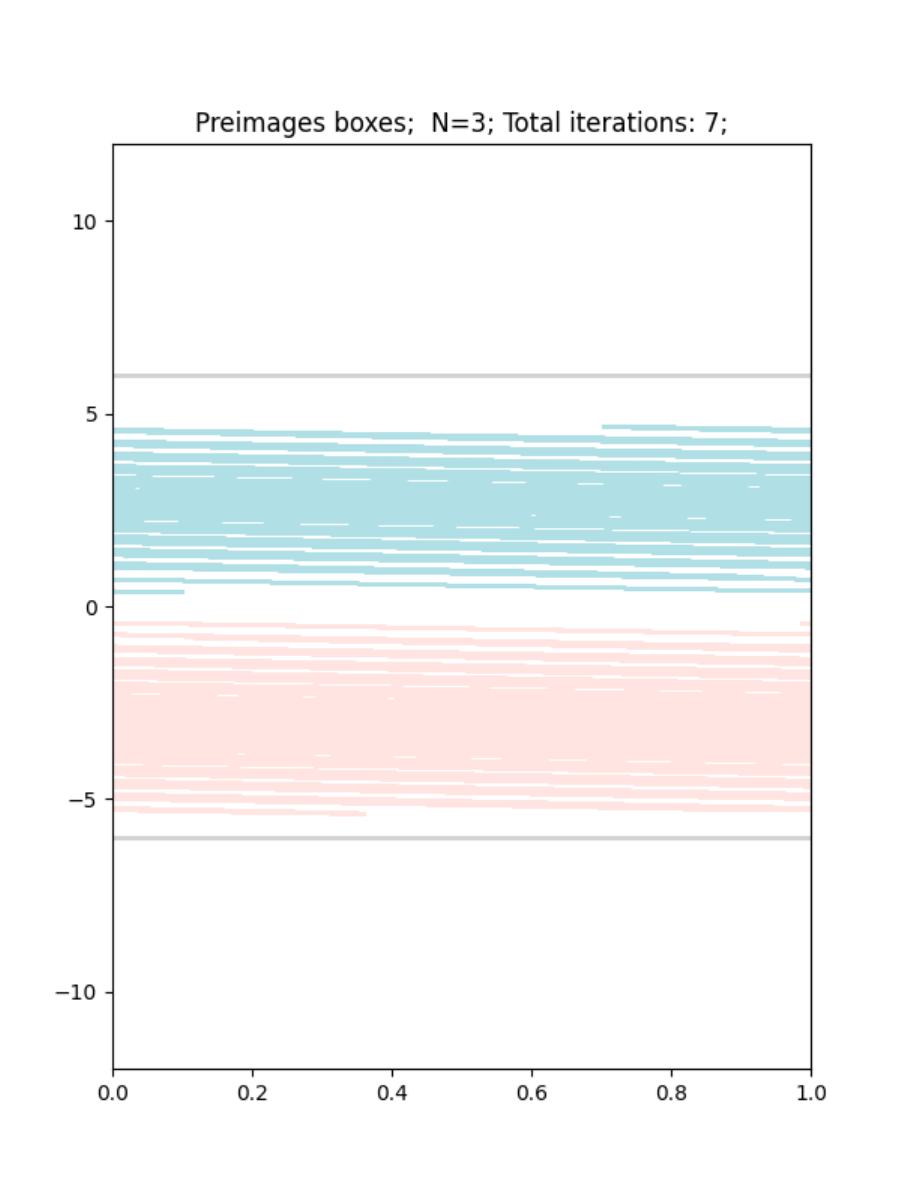}
\caption{}\label{dis2}
\end{minipage}
\end{figure}
\begin{figure}[h]
\begin{minipage}{0.35\linewidth}
\includegraphics[width=1\linewidth]{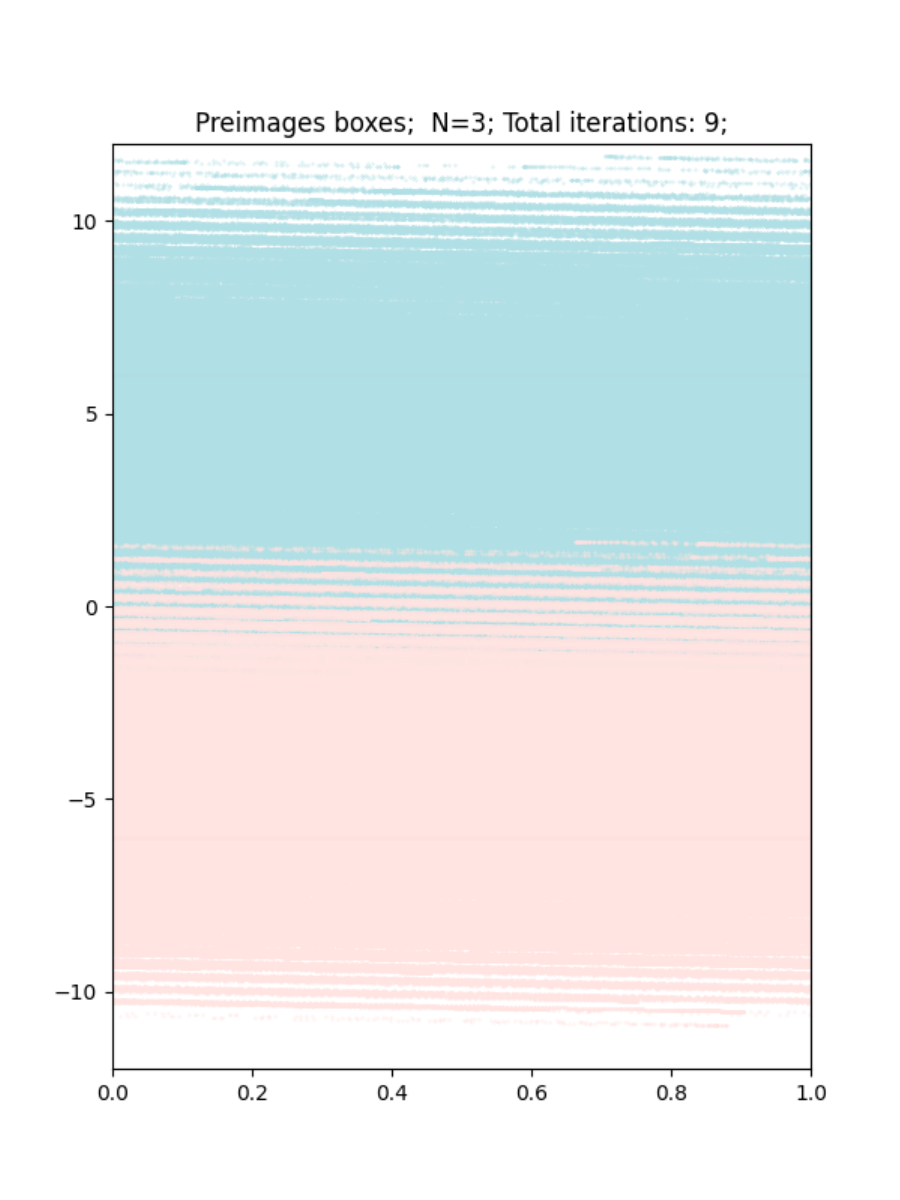}
\caption{}\label{dis3}
\end{minipage}
\begin{minipage}{0.35\linewidth}
\includegraphics[width=1\linewidth]{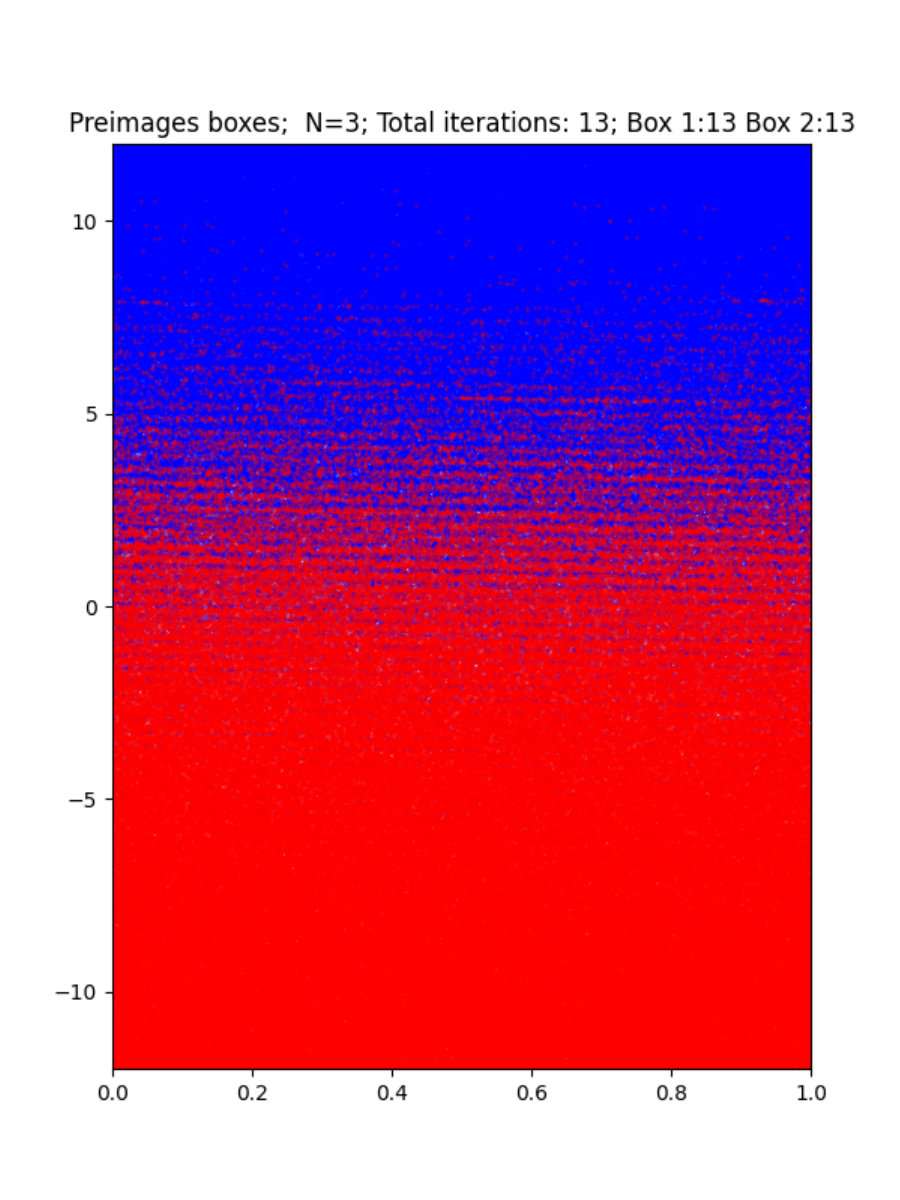}
\caption{}\label{dis4}
\end{minipage}
\end{figure}

Such visualization shows that for some pair of small neighborhoods of the fixed points $x_0,x_1$ we reach (after 13 steps under negative iterations) the lines given by the projections of $y=6$ and $y=-6$ as required in (1).
Having this in mind, we work in $C^{++}$ with the Library 
CAPD::DynSys in order to validate the existence
of the points $x_0^+,x_0^-,x_1^+,x_1^-$ (which is based on the Interval Arithmetic method), fulfilling the point (1) described above.

\smallskip

For point (2) 
we proceed as follows. Consider the four segments $S^+_0,S^-_0,S^+_1,S^-_1,$ where for $i\in\{0,1\}$, $S^{\pm}_i$ has endpoints $x_i$ and  $x_i^{\pm}$, obtained from the numerical validation. For each of these segments
we want to show that their images by $f_{a,b}^k$ with $k=0,1,\ldots,N$ is contained in the interior of the box $B_i$ centered at $x_i$, with width 1 and height 2, given that in such a case,
$S_0^+\cup S_0^-$ and $S_1^+\cup S_1^-$ can be extended to an $N$-disjoint pair of neighborhoods and we can
apply Theorem C.
There are two natural ways for checking this property. The most naive approach is to consider the maximum expansion $\eta_{a,b}$ associated to $f_{a,b}$, which is given by the supremum norm of the Jacobian $J_{x,y}f_{a,b}$, and then check the inequality $|S^j_i| \,\eta_{a,b}^N\leq \frac{1}{2}$. Unfortunately, this rather crude analytical approach
fails in some of the cases where we have obtained the CAPs.  The second way is to fix some small $\varepsilon$,
and consider an array of points $P=\{z_1,\ldots ,z_n \}$ inside the segment $S^j_i$, such that the gaps
between these points have a maximum length $\delta$ verifying $\delta\,\eta_{a,b}^N<\varepsilon$.
Then, by considering the new box $B_i^{\varepsilon}$ obtained by removing those points in $B_i$ whose distance to the boundary is less than  $\varepsilon$, in case we obtain a numerical validation for 
$$f_{a,b}^j(z_k)\in B_i^{\varepsilon}\mbox{ for }i=0,1,\,j=0,1,\dots,N,$$
it is obtained a CAP of point (2) for the continua $U_0=S_0^+\cup S_0^-$ and $U_1=S_1^+\cup S_1^-$. And this is the
approach that worked for our successful implementations.

\smallskip

The next table shows a list of parameters for the  Dissipative Standard Family for which we have obtained a CAP of the existence of chaos by the method described above. The work in preparation \cite{theoappandnumer} contains the details for the implemented software (which are more sophisticated versions than the basic ones used here), and explores
a much wider range of parameters and other families of maps (such
as non-twist Dissipative Standard Maps and the two-dimensional Arnold Family). 

\begin{table}[ht]
\caption{\textbf{D.S.F.}} 
\centering 
\begin{tabular}{|c |c | c|} 
\hline 
\ \ \textbf{b}\ \  &\ \ \textbf{a}\ \  & \ \  \textbf{Max. needed back. Iter.}  \\ [0.5ex] 
\hline 
0.8 & 3  & 12\\
0.7 & 3  & 10\\
0.6 & 3  & 10\\
0.5 & 3  & 10\\
0.4 & 3  & 10\\
0.3 & 3  & 9\\
0.2 & 3  & 9\\
\hline 
\end{tabular}
\label{table:nonlin} 
\end{table}

One important aspect here is given by the fact that the method can be applied for a wide range of Jacobian values
of the map, given by $b$. One should compare this to the known literature, where usually results proving positive entropy are known only for very small Jacobian values, see
\cite{linyoungkicked}. When the parameter $b$ is close to zero, one needs to increase $a$ so as to find the fixed points required by the theoretical results. For a study of small values of $a$ one should work with powers of the original map so as to ensure the existence of the necessary pairs of fixed points. In this first presentation we work with a fixed parameter $a=3$
for the sake of simplicity, whereas in the forthcoming work we show applications for a
wide range of parameters $a$.

\bibliographystyle{plain}
\bibliography{bibliografia2}

\end{document}